\def\nc{\newcommand}
\def\om{\omega}
 \def\Om{\Omega}
\nc\pa{\partial}
\nc\CC{\mathbb{C}}
\nc\RR{\mathbb{R}}
\nc\QQ{\mathbb{Q}}
\nc\ZZ{\mathbb{Z}}
\nc\NN{\mathbb{N}}
\nc\m[1]{\left| #1\right|}
\nc\norm[1]{\left\|#1\right\|}
\newtheorem{theorem}{Theorem}[section]
\newtheorem{lemma}[theorem]{Lemma}
\newtheorem{definition}[theorem]{Definition}
\newtheorem{remark}[theorem]{Remark}        
\numberwithin{equation}{section}
\begin{document}

\title[Quasilinear Riccati type equations]
{Quasilinear Riccati type equations with  oscillatory and singular data}

\author[Quoc-Hung Nguyen]
{Quoc-Hung Nguyen}
\address{ShanghaiTech University,
393 Middle Huaxia Road, Pudong,
Shanghai, 201210, China.}
\email{qhnguyen@shanghaitech.edu.cn}

\author[Nguyen Cong Phuc]
{Nguyen Cong Phuc}
\address{Department of Mathematics,
Louisiana State University,
303 Lockett Hall, Baton Rouge, LA 70803, USA.}
\email{pcnguyen@math.lsu.edu}


\begin{abstract} We characterize the existence of solutions to the quasilinear Riccati type equation
\begin{eqnarray*}
\left\{ \begin{array}{rcl}
-{\rm div}\,\mathcal{A}(x, \nabla u)&=& |\nabla u|^q + \sigma  \quad \text{in} ~\Omega, \\
u&=&0  \quad \text{on}~ \partial \Omega,
\end{array}\right.
\end{eqnarray*}
with a distributional or measure datum $\sigma$. Here ${\rm div}\,\mathcal{A}(x, \nabla u)$ is a quasilinear elliptic operator modeled after 
the $p$-Laplacian ($p>1$),  and $\Om$ is a bounded  domain whose boundary is sufficiently flat (in the sense of Reifenberg). For distributional data, we assume that $p>1$ and $q>p$. For measure data, we assume that they are compactly supported in $\Om$,  $p>\frac{3n-2}{2n-1}$, and $q$ is in the sub-linear range $p-1<q<1$. We also assume more regularity conditions  on $\mathcal{A}$ and on 
$\partial\Om$ in this case.
\end{abstract}

\maketitle

\section{Introduction and main results}

We  address in this note the question of existence for the quasilinear Riccati type equation
\begin{eqnarray}\label{Riccati11}
\left\{ \begin{array}{rcl}
-{\rm div}\,\mathcal{A}(x, \nabla u)&=& |\nabla u|^q + \sigma  \quad \text{in} ~\Omega, \\
u&=&0  \quad \text{on}~ \partial \Omega,
\end{array}\right.
\end{eqnarray}
where the datum $\sigma$ is generally a signed distribution given on a bounded domain $\Om\subset \RR^n$, $n\geq 2$.

In \eqref{Riccati11} the nonlinearity   $\mathcal{A}: \RR^n\times\RR^n \rightarrow \RR^n$ is a Carath\'eodory vector valued function, i.e., $\mathcal{A}(x,\xi)$ is measurable in $x$ for every $\xi$ and continuous in
$\xi$ for a.e. $x$. Moreover,  for a.e. $x$, $\mathcal{A}(x,\xi)$ is differentiable in $\xi$ away from the origin. 
Our standing assumption is that  $\mathcal{A}$ satisfies the following growth and monotonicity conditions: for some $1<p<\infty$ and $\Lambda\geq 1$ there hold
\begin{equation}\label{sublinear}
|\mathcal{A}(x,\xi)| \leq  \Lambda \m{\xi}^{p-1},  \quad |\nabla_{\xi}\mathcal{A}(x,\xi)| \leq  \Lambda\m{\xi}^{p-2}
\end{equation}
and
\begin{equation}\label{monotone}
\langle\mathcal{A}(x,\xi)-\mathcal{A}(x,\eta),\xi-\eta \rangle\geq   \Lambda^{-1} (|\xi|^2+|\eta|^2)^{\frac{p-2}{2}}|\xi-\eta|^2
\end{equation}
for any  $(\xi, \eta)\in \RR^n \times \RR^n\setminus (0,0)$
and a.e. $x \in \RR^n$.  The special case $\mathcal{A}(x, \xi)=|\xi|^{p-2}\xi$ gives 
rise to the standard $p$-Laplacian $\Delta_p u= {\rm div}\,( |\nabla u|^{p-2} \nabla u)$. Note that these conditions imply that  $\mathcal{A}(x,0)=0$ for a.e. $x\in \RR^n$, and 
\begin{equation*}
  \langle \nabla_{\xi} \mathcal{A}(x,\xi)\lambda, \lambda\rangle \geq 2^{\frac{p-2}{2}}   \Lambda^{-1}  |\xi|^{p-2} |\lambda|^{2}
\end{equation*}
for every $(\lambda,\xi)\in\RR^n \times\RR^n\setminus\{(0,0)\}$ and  a.e. $x \in\RR^n$.

More regularity conditions will be imposed later on the nonlinearity $\mathcal{A}(x,\xi)$  in the $x$-variable  and on  the boundary $\partial\Om$ of $\Om$.

One can view  \eqref{Riccati11}  as a quasilinear stationary  viscous Hamilton-Jacobi equation or  Kardar-Parisi-Zhang equation,  
which appears  in the physical theory of surface growth  \cite{KPZ, KS}.

\noindent {\bf Necessary conditions:} For $q>p-1$, it is  known   (see \cite{HMV, Ph1}) that in order for \eqref{Riccati11} to have a $u$ with  $|\nabla u|\in L^{q}_{\rm loc}(\Om)$ it is necessary that $\sigma$ be regular and small enough. In particular, if $\sigma$ is a signed measure  these necessary conditions can be quantified as
\begin{equation}\label{necmeas}
\int_{\Om} |\varphi|^{\frac{q}{q-p+1}} d\sigma \leq \Lambda^{\frac{q}{q-p+1}} \left(\frac{q-p+1}{p-1}\right)^\frac{1-p}{q-p+1} \int_{\Om} |\nabla \varphi|^{\frac{q}{q-p+1}} dx
\end{equation}
for all $\varphi\in C_0^{\infty}(\Om)$. This can be seen by  
using $|\varphi|^{\frac{q}{q-p+1}}$ as a test function in \eqref{Riccati11} and applying the first inequality in \eqref{sublinear} to get
$$ \int_{\Om} |\varphi|^{\frac{q}{q-p+1}} d\sigma \leq   \frac{\Lambda q}{q-p+1} \int_{\Om} |\nabla u|^{p-1}  |\varphi|^{\frac{p-1}{q-p+1}} |\nabla\varphi| dx-\int_{\Om} |\nabla u|^q |\varphi|^{\frac{q}{q-p+1}} dx. $$
Then by an appropriate  Young's inequality one arrives at  \eqref{necmeas} (see also \cite{Ph1} and \cite{JMV2}). Note that \eqref{necmeas} also holds
 when $\sigma$ is a distribution in $W^{-1, \, \frac{q}{p-1}}_{\rm loc}(\Om)$ in which case the left-hand side should be understood as $\langle \sigma, |\varphi|^{\frac{q}{q-p+1}}\rangle$.

Thus if $\sigma$ is a {\it nonnegative measure} (or equivalently a nonnegative distribution) compactly supported in $\Om$ then  condition \eqref{necmeas}
implies the capacitary condition
\begin{equation}\label{cappos}
\sigma(K)\leq C \, {\rm Cap}_{1, \, \frac{q}{q-p+1}}(K)
\end{equation}
for every compact set $K\subset\Om$ and  a constant $C$ independent of $K$.
Here ${\rm Cap}_{1, \, s}$, $s>1$, is the capacity associated to the Sobolev space $W^{1,\, s}(\RR^n)$ defined for each compact set $K\subset\RR^n$ by
\begin{equation*}
{\rm Cap}_{1, \, s}(K)=\inf\Big\{\int_{\RR^n}(|\nabla \varphi|^s +\varphi^s) dx: \varphi\in C^\infty_0(\RR^n),
\varphi\geq \chi_K \Big\},
\end{equation*}
where $\chi_{K}$ is the characteristic function of $K$.

Moreover, in the case of nonnegative measure datum $\sigma$, all solutions of \eqref{Riccati11} must obey the regularity condition
\begin{equation}\label{nab}
\int_{K}|\nabla u|^q dx \leq C \, {\rm Cap}_{1, \, \frac{q}{q-p+1}}(K)
\end{equation}
for every compact set $K\subset\Om$. However, unlike \eqref{cappos}, the constant  $C$ in \eqref{nab}  might depend on the distance from $K$ to the boundary of $\Omega$  (see \cite{HMV, Ph1}).

Motivated from \eqref{cappos}, we now introduce 
the following definition.

\begin{definition} Given  $s >1$ and a  domain $\Om\subset\RR^n$ we define the space $M^{1, \, s}(\Om)$ to be the set of all  signed measures
	$\mu$ with bounded total variation in $\Om$ such that the quantity $[\mu]_{M^{1,\, s}(\Om)}<+\infty$, where
	$$[\mu]_{M^{1,\, s}(\Om)}:=\sup\left\{ |\mu|(K)/{\rm Cap}_{1,\, s}(K): {\rm Cap}_{1, \, s}(K)>0 \right\},$$
	with the supremum being taken over all compact sets $K\subset \Om$.
\end{definition}

It is well-known that a measure $\mu\in M^{1,\, s}(\Om)$ if and only if the trace inequality
\begin{equation}\label{trace}
\int_{\RR^n} |\varphi|^{s} d|\mu| \leq C \int_{\RR^n} ( |\nabla \varphi|^{s} +|\varphi|^s )dx
\end{equation}
holds for all $\varphi\in C^{\infty}_0(\RR^n)$, with a constant $C$ independent of $\varphi$. Here $\mu$ is extended by zero outside $\Om$. For
this characterization see, e.g., \cite{AH}. Other characterizations are also available (see \cite{MV}).

In practice, it is useful to realize that the condition $\mu \in M^{1,\, s}(\Om)$ is satisfied if $\mu$ is a {\it function} verifying 
the Fefferman-Phong condition $\mu\in \mathcal{L}^{1+\epsilon;\, s(1+\epsilon)}(\Om)$ for some 
$\epsilon>0$ (see \cite{Fef}). Here $\mathcal{L}^{1+\epsilon;\, s(1+\epsilon)}(\Om)$ is a Morrey space (see, e.g., \cite{M-P3}). In particular, it is satisfied 
provided $\mu$ is a function in the weak Lebesgue space $L^{\frac{n}{s},\, \infty}(\Om)$, $s<n$. Another sufficient  condition is given by   $(G_1*|\mu|)^{\frac{s}{s-1}} \in \mathcal{L}^{1+\epsilon;\, s(1+\epsilon)}(\Om)$ for some $\epsilon>0$ (see \cite{MV}), where  $G_1$ is the Bessel kernel of order $1$ defined via its Fourier transform by $\widehat{G_1}(\xi)=(1+|\xi|^2)^{\frac{-1}{2}}$.

Now in view of \eqref{nab}, it is natural to look for a solution $u$ of \eqref{Riccati11} such that $|\nabla u|^q$ belongs to $M^{\frac{q}{q-p+1}}(\Om)$. In this paper, we will be interested in only  such a  space of solutions.

\noindent {\bf Sufficient conditions in capacitary terms:} There are many papers that obtain existence results for equation 
\eqref{Riccati11} under certain integrability conditions on the datum $\sigma$ which are generally not sharp. 
 The pioneering work \cite{HMV}  originally used capacities to treat   \eqref{Riccati11} in the `linear' case $p=2$    in $\RR^n$ ($q>1$), or   in a bounded domain $\Om$ ($q>2$).   For $p> 2-\frac{1}{n}$ and $q\geq 1$,  it was shown in  \cite{Ph5, 55Ph2-2} (see also \cite{GMP, PhJM} for the sub-critical case $p-1<q < n(p-1)/(n-1)$) that, under certain regularity conditions on $\mathcal{A}$ and  $\partial\Om$, if $\sigma$ is a finite  {\it signed measure}   in $M^{\frac{q}{q-p+1}}(\Om)$, with  $[\sigma]_{M^{\frac{q}{q-p+1}}(\Om)}$ being  sufficiently small, 
 then equation \eqref{Riccati11} admits a solution
$u\in W^{1, \, q}_0(\Om)$ such that $|\nabla u|^q \in M^{\frac{q}{q-p+1}}(\Om)$.
 Similar existence results have recently been extended to the case $\frac{3n-2}{2n-1}<p\leq 2-\frac{1}{n}$, $q\geq 1$, in  \cite{QH4} and to the case $1<p\leq \frac{3n-2}{2n-1}$, $q\geq 1$, in \cite{QH5}. We also mention that the earlier work \cite{Ph1, Ph1-1}  covers all $p>1$ but only for $q>p$.

We observe that whereas the existence results of \cite{HMV,Ph5,QH4,QH5, Ph1} are sharp when $\sigma$ is a nonnegative measure,   
they   could not be applied to a large class  distributional data $\sigma$ with strong oscillation. Take for example 
the function 
$$f(x)=|x|^{-\epsilon-s}\sin(|x|^{-\epsilon}),$$
where  $s=q/(q-p+1)$ and $\epsilon>0$ such that $\epsilon+s<n$. Then $\sigma= |f(x)|dx$ fails to satisfy the capacitary inequality \eqref{cappos}, but it is possible to show that the equation $$-\Delta_p u=|\nabla u|^q +\lambda f, \qquad q \geq p,$$ admits a solution $u\in W^{1,q}_0(B_1(0))$ provided $|\lambda|$ is sufficiently small.  
For this see \cite{M-P3} which addresses oscillatory data in the Morrey space framework. See also \cite{AP1,AP2,FM1, FM2} in   which  the  case $q=p$ is considered.  Note  that in this special case, the Riccati type equation
$-{\rm div}\,\mathcal{A}(x, \nabla u)= |\nabla u|^p + \sigma$
is strongly related to the Schr\"odinger type equation $-{\rm div}\,\mathcal{A}(x, \nabla u)=\sigma |u|^{p-2}u$ (see \cite{BH}).
This relation has been employed in an essential way in \cite{JMV1, JMV2} to study the existence of  local solutions in this case.
Here by a local solution we mean one that belongs to $W^{1, \, p}_{\rm loc}(\Om)$ and has no pre-specified boundary condition.

\noindent {\bf Main results:} The first main result of this paper is to treat \eqref{Riccati11} with oscillatory data in the framework of the natural space $M^{1,\frac{q}{q-p+1}}(\Om)$. This provides non-trivial improvements of the results of \cite{HMV,Ph5,QH4,QH5, Ph1} and \cite{M-P3} at least in the case $q>p$. 
We first observe the following  necessary condition on $\sigma$ so that \eqref{Riccati11} has a solution $u$ such that $|\nabla u|^q\in M^{1,\frac{q}{q-p+1}}(\Om)$.

\begin{theorem}\label{necdis}
Let $p>1$,  $q\geq 1$,  and let  $\mathcal{A}$ satisfy the first inequality in \eqref{sublinear}.
Suppose that  $\sigma$ is a distribution in a bounded domain $\Om$ such that the Riccati type equation
\begin{equation}\label{sigmaE}
 -{\rm div}\,\mathcal{A}(x, \nabla u)= |\nabla u|^q + \sigma  \quad \text{in} ~\mathcal{D}'(\Omega)
\end{equation}
admits a  solution $u\in W^{1, \, q}(\Om)$ with $|\nabla u|^q \in M^{1,\, \frac{q}{q-p+1}}(\Om)$. Then  there exists a vector field ${\bf f}$ on $\Om$ such
that $\sigma={\rm div}\, {\bf f}$ and $|{\bf f}|^{\frac{q}{p-1}}\in M^{1,\, \frac{q}{q-p+1}}(\Om)$.  In particular, we have  $\sigma\in W^{-1, \, \frac{q}{p-1}}(\Om)$, and moreover
\begin{equation}\label{sigmatrace}
\left |\langle \sigma, |\varphi|^{\frac{q}{q-p+1}} \rangle\right| \leq C \int_{\Om} |\nabla \varphi|^{\frac{q}{q-p+1}} dx
\end{equation}
for all $\varphi\in C_0^{\infty}(\Om)$, with a constant $C$ independent of $\varphi$.
\end{theorem}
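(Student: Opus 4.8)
The plan is to read the vector field $\mathbf f$ off the equation itself. Write $s:=\frac{q}{q-p+1}$, so that $s>1$ (because $p>1$) and its conjugate exponent is $s'=\frac{s}{s-1}=\frac{q}{p-1}$, with $(p-1)s'=q$. The first inequality in \eqref{sublinear} then gives the pointwise bound $|\mathcal A(x,\nabla u)|^{s'}\le\Lambda^{s'}|\nabla u|^{q}$, so $\mathcal A(x,\nabla u)\in L^{s'}(\Om)$ and, since $M^{1,s}(\Om)$ is closed under pointwise domination by a measure already in it, $|\mathcal A(x,\nabla u)|^{s'}\in M^{1,s}(\Om)$. Rewriting \eqref{sigmaE} as $\sigma=-{\rm div}\,\mathcal A(x,\nabla u)-w$ with $w:=|\nabla u|^q\ge0$, we see that everything is reduced to producing a vector field $\mathbf g\in L^{s'}(\Om)$ with ${\rm div}\,\mathbf g=-w$ in $\mathcal D'(\Om)$ and $|\mathbf g|^{s'}\in M^{1,s}(\Om)$; then $\mathbf f:=-\mathcal A(x,\nabla u)+\mathbf g$ satisfies ${\rm div}\,\mathbf f=\sigma$, and $|\mathbf f|^{s'}\le 2^{s'-1}\big(|\mathcal A(x,\nabla u)|^{s'}+|\mathbf g|^{s'}\big)\in M^{1,s}(\Om)$ since $M^{1,s}(\Om)$ is a cone closed under addition.

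To construct $\mathbf g$ I would extend $w$ by zero to $\RR^n$ (it remains in $M^{1,s}(\RR^n)$ by monotonicity of capacity) and set $\mathbf g=\nabla(\mathbf I_2 w)$, where $\mathbf I_2 w$ is the Newtonian potential of $w$ for $n\ge3$ (and the logarithmic potential for $n=2$), so that ${\rm div}\,\mathbf g=\Delta(\mathbf I_2 w)=-w$ on all of $\RR^n$, in particular in $\mathcal D'(\Om)$; this construction uses nothing about $\mathcal A$ or about $\partial\Om$. Since the kernel of $\nabla\mathbf I_2$ is comparable to $|x-y|^{1-n}$, one has the pointwise estimate $|\mathbf g(x)|\le C\,\mathbf I_1 w(x)$, with $\mathbf I_1$ the Riesz potential of order $1$. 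Hence it suffices to prove the self‑improvement
$$ w\in M^{1,s}(\RR^n),\ w\ge0\ \Longrightarrow\ \mathbf I_1 w\in L^{s'}(\RR^n)\ \text{ and }\ (\mathbf I_1 w)^{s'}\in M^{1,s}(\RR^n), $$
which yields $\mathbf g\in L^{s'}(\Om)$ and $|\mathbf g|^{s'}\in M^{1,s}(\Om)$.

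This implication is the crux, and I expect the main work to be here. The integrability $\|\mathbf I_1 w\|_{L^{s'}(\RR^n)}\le C\,w(\RR^n)^{1/s'}$ follows by duality from Maz'ya's characterization of $M^{1,s}$ as the class of $w$ for which $\mathbf I_1\colon L^{s}(dx)\to L^{s}(dw)$ is bounded: pairing $\mathbf I_1 w$ against $h\in L^s(dx)$, moving $\mathbf I_1$ onto $h$ by the symmetry of the kernel, and applying Hölder and this boundedness gives the bound. For the capacitary estimate $\int_K(\mathbf I_1 w)^{s'}\,dx\le C\,{\rm Cap}_{1,s}(K)$ I would split $w=w\chi_{2B}+w\chi_{\RR^n\setminus 2B}$ about each ball $B$: the local piece is controlled by the preceding $L^{s'}$‑bound applied to $w\chi_{2B}$ (still in $M^{1,s}$ with the same constant and with mass $\le C[w]_{M^{1,s}}\,{\rm Cap}_{1,s}(2B)$), while the far piece is summed over dyadic annuli using $w(2^{j}B)\le[w]_{M^{1,s}}{\rm Cap}_{1,s}(2^{j}B)$; passing from balls to arbitrary compact sets needs a slightly finer (Morrey‑with‑extra‑integrability, i.e.\ Fefferman--Phong) upgrade, since ball testing by itself only gives the weaker Morrey class. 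Alternatively one may simply invoke the known equivalence (Maz'ya--Verbitsky type results on form boundedness and iterated potentials) that $(\mathbf I_1 w)^{s'}\in M^{1,s}$ precisely when $w\in M^{1,s}$.

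The remaining conclusions are then immediate. Since $\mathbf f\in L^{s'}(\Om)$ and $\sigma={\rm div}\,\mathbf f$, we get $\sigma\in W^{-1,s'}(\Om)=W^{-1,\frac{q}{p-1}}(\Om)$. For \eqref{sigmatrace}, given $\varphi\in C_0^\infty(\Om)$ one computes
$$ \langle\sigma,|\varphi|^{s}\rangle=-\int_\Om\mathbf f\cdot\nabla(|\varphi|^{s})\,dx=-s\int_\Om\mathbf f\cdot\nabla\varphi\,|\varphi|^{s-2}\varphi\,dx, $$
so that, using $(s-1)s'=s$ and Hölder,
$$ |\langle\sigma,|\varphi|^{s}\rangle|\le s\Big(\int_\Om|\mathbf f|^{s'}|\varphi|^{s}\,dx\Big)^{1/s'}\Big(\int_\Om|\nabla\varphi|^{s}\,dx\Big)^{1/s}; $$
the trace inequality \eqref{trace} for the measure $|\mathbf f|^{s'}dx\in M^{1,s}(\Om)$ bounds the first factor by $C\big(\int|\nabla\varphi|^s+\int|\varphi|^s\big)^{1/s'}$, and Poincaré's inequality on the bounded domain $\Om$ absorbs $\int|\varphi|^s$ into $\int|\nabla\varphi|^s$, giving \eqref{sigmatrace}. (As a cross‑check, \eqref{sigmatrace} can also be obtained directly by using $|\varphi|^{s}$ as a test function in \eqref{sigmaE}: estimating $\int\mathcal A(x,\nabla u)\cdot\nabla(|\varphi|^{s})$ by Hölder with exponents $\frac{q}{p-1}$ and $s$ against $\big(\int|\nabla u|^q|\varphi|^{s}\big)^{(p-1)/q}\big(\int|\nabla\varphi|^{s}\big)^{1/s}$ and then invoking $|\nabla u|^q\in M^{1,s}(\Om)$ together with \eqref{trace} and Poincaré, one arrives again at $|\langle\sigma,|\varphi|^{s}\rangle|\le C\int|\nabla\varphi|^{s}\,dx$.)
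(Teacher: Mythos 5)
Your proposal is correct and follows essentially the same route as the paper: decompose $\sigma={\rm div}\,\mathbf f$ with $\mathbf f=-\mathcal A(x,\nabla u)+\mathbf g$, control $|\mathcal A(x,\nabla u)|^{q/(p-1)}$ by $|\nabla u|^q$ via \eqref{sublinear}, build $\mathbf g$ from a kernel with the $|x-y|^{1-n}$ bound, and close via the Maz'ya--Verbitsky iterated-potential theorem and the trace/Poincar\'e argument. The only cosmetic difference is that you take $\mathbf g=\nabla(\mathbf I_2 w)$ on $\mathbb{R}^n$ while the paper uses $\int_B\nabla_x G(x,y)\,w(y)\,dy$ with $G$ the Dirichlet Green function of $-\Delta$ on a ball $B\supset\Om$; both kernels enjoy the same pointwise bound, and the self-improvement $w\in M^{1,s}\Rightarrow(\mathbf I_1w)^{s'}\in M^{1,s}$ that you flag as the crux is exactly what the paper imports from \cite[Cor.\ 2.5]{Ph1} (equivalently \cite[Thm.\ 1.2]{MV}) -- your own observation that the ad hoc ball decomposition only yields a Morrey bound is accurate, so the fallback to the cited equivalence is the right move.
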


Conversely, when $q>p$ we obtain the following existence result.

\begin{theorem}\label{main-Ric}
Let $1<p<q<\infty$, $R_0>0$, and assume that  $\mathcal{A}$ satisfies \eqref{sublinear}-\eqref{monotone}.
Then there exists a constant  $\delta=\delta(n, p,  \Lambda, q)\in (0, 1)$ such that the following holds.
Let
$\om\in M^{1,\, \frac{q}{q-p+1}}(\Om)$ and let ${\bf f}$ be a vector field on $\Om$ such that $|{\bf f}|^{\frac{q}{p-1}} \in M^{1,\, \frac{q}{q-p+1}}(\Om)$.
Assume that   $\Om$ is $(\delta, R_0)$-Reifenberg flat and that $\mathcal{A}$ satisfies the $(\delta, R_0)$-BMO condition. Then there exists a positive  constant
$c_0=c_0(n, p, \Lambda, q, {\rm diam}(\Om), {\rm diam}(\Om)/R_0)$ such that whenever
\begin{equation*}
[\om]^{\frac{q}{p-1}}_{M^{1, \frac{q}{q-p+1}}(\Om)} + [|{\bf f}|^{\frac{q}{p-1}}]_{M^{1, \frac{q}{q-p+1}}(\Om)}\leq c_0,
\end{equation*}
there exists a  solution $u\in W_0^{1, \, q}(\Om)$ to the Riccati type equation
\begin{eqnarray}\label{Riccati}
\left\{ \begin{array}{rcl}
 -{\rm div}\,\mathcal{A}(x, \nabla u)&=& |\nabla u|^q + \om+ {\rm div}\, {\bf f}  \quad \text{in} ~\Omega, \\
u&=&0  \quad \text{on}~ \partial \Omega,
\end{array}\right.
\end{eqnarray}
with $|\nabla u|^q \in M^{1,\, \frac{q}{q-p+1}}(\Om)$.
\end{theorem}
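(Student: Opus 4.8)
The plan is to build the solution by a fixed-point/iteration scheme in the space $X=\{w:|\nabla w|^q\in M^{1,\frac{q}{q-p+1}}(\Om)\}$, relying on a good linear theory for the equation $-{\rm div}\,\mathcal{A}(x,\nabla v)=\mu+{\rm div}\,{\bf g}$ under the Reifenberg-flat and $(\delta,R_0)$-BMO hypotheses. The key analytic input is a gradient estimate: for the Dirichlet problem with datum in $M^{1,s}(\Om)$ (here $s=\frac{q}{q-p+1}$), the renormalized solution $v\in W_0^{1,q}(\Om)$ satisfies $[|\nabla v|^q]_{M^{1,s}(\Om)}\le C\big([\mu]^{1/(p-1)}_{M^{1,s}(\Om)}+[|{\bf g}|^{q/(p-1)}]_{M^{1,s}(\Om)}\big)^{?}$ with the exponent dictated by the scaling of the $p$-Laplacian. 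Such estimates are exactly of the type proved in \cite{Ph5,QH4,QH5}: one localizes on balls $B_{2r}$, compares $v$ with the $\mathcal{A}$-harmonic replacement (using the $(\delta,R_0)$-BMO condition on $\mathcal{A}$ to freeze the coefficients and Reifenberg flatness near the boundary), invokes interior and boundary $C^{1,\alpha}$/Lipschitz estimates for the homogeneous problem, and then runs a Calderón–Zygmund covering argument together with the capacitary characterization \eqref{trace} of $M^{1,s}$. I would state this as a lemma and treat it as available machinery.

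Granting the linear estimate, the next step is the nonlinear iteration. Set $u_0=0$ and, inductively, let $u_{k+1}$ solve
\begin{equation*}
-{\rm div}\,\mathcal{A}(x,\nabla u_{k+1})=|\nabla u_k|^q+\om+{\rm div}\,{\bf f}\quad\text{in }\Om,\qquad u_{k+1}=0\text{ on }\partial\Om.
\end{equation*}
Because $|\nabla u_k|^q\ge 0$ and $|\nabla u_k|^q\in M^{1,s}(\Om)$ by the induction hypothesis, the right-hand side is an admissible datum. Apply the linear gradient estimate to get
\begin{equation*}
[|\nabla u_{k+1}|^q]_{M^{1,s}(\Om)}\le C_1\Big([|\nabla u_k|^q]_{M^{1,s}(\Om)}+[\om]_{M^{1,s}(\Om)}+[|{\bf f}|^{\frac{q}{p-1}}]_{M^{1,s}(\Om)}\Big)^{\frac{q}{p-1}},
\end{equation*}
after absorbing constants and using homogeneity. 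Writing $a_k=[|\nabla u_k|^q]_{M^{1,s}(\Om)}$ and $b=[\om]^{\frac{q}{p-1}}_{M^{1,s}(\Om)}+[|{\bf f}|^{\frac{q}{p-1}}]_{M^{1,s}(\Om)}$, this is a recursion $a_{k+1}\le C_1(a_k+b^{(p-1)/q})^{q/(p-1)}$ with $q/(p-1)>1$; a standard continuity/bootstrap argument shows that if $b\le c_0$ for a suitably small $c_0$ (depending on $C_1$, hence on $n,p,\Lambda,q,{\rm diam}(\Om),{\rm diam}(\Om)/R_0$), then $a_k\le M:=2c_0^{(p-1)/q}$ for all $k$, i.e. the iterates stay in a fixed ball of $X$. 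In particular $\sup_k\|\nabla u_k\|_{L^q(\Om)}<\infty$.

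The final step is to pass to the limit. Uniform $W_0^{1,q}$ bounds give a weakly convergent subsequence $u_k\rightharpoonup u$ in $W_0^{1,q}(\Om)$; the standard monotonicity/compactness argument for quasilinear operators (Boccardo–Murat type, using \eqref{monotone}) upgrades this to $\nabla u_k\to\nabla u$ a.e., hence $|\nabla u_k|^q\to|\nabla u|^q$ in $L^1(\Om)$ after an equi-integrability check coming from the uniform $M^{1,s}$ bound, and $\mathcal{A}(x,\nabla u_k)\to\mathcal{A}(x,\nabla u)$ in $L^{q/(p-1)}$; thus $u$ solves \eqref{Riccati}. Lower semicontinuity of the capacitary functional $[\,\cdot\,]_{M^{1,s}}$ under a.e. convergence (via Fatou applied through \eqref{trace}) yields $[|\nabla u|^q]_{M^{1,s}(\Om)}\le M<\infty$, so $|\nabla u|^q\in M^{1,s}(\Om)$ as claimed. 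The main obstacle is the linear gradient estimate in the capacitary space $M^{1,s}(\Om)$ up to the boundary: controlling the comparison estimates and the Calderón–Zygmund iteration in terms of capacities (rather than Lebesgue or Morrey norms) near a merely Reifenberg-flat boundary is delicate, and this is where the smallness of $\delta$ and the BMO hypothesis on $\mathcal{A}$ are essential; once that estimate is in hand, the nonlinear scheme is routine.
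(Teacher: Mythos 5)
Your proposal follows a genuinely different route from the paper — an explicit Picard iteration rather than the paper's application of Schauder's fixed point theorem on the set
$E=\{v\in W^{1,q}_0(\Om):{\bf G}_1(|\nabla v|^q)\le T\,{\bf G}_1[|{\bf f}|^{q/(p-1)}+{\bf G}_1(|\om|)^{q/(p-1)}]\ \text{a.e.}\}$ — and it has two genuine gaps, one structural and one technical.

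The structural gap is in the passage to the limit. Your iterates satisfy
$-{\rm div}\,\mathcal{A}(x,\nabla u_{k+1})=|\nabla u_k|^q+\om+{\rm div}\,{\bf f}$, and after extracting a subsequence $u_{k_j}\rightharpoonup u$ with $\nabla u_{k_j}\to\nabla u$ a.e.\ (Boccardo--Murat), the right-hand side of the $u_{k_j}$-equation involves $|\nabla u_{k_j-1}|^q$, i.e.\ the \emph{shifted} subsequence. Nothing in the argument identifies the a.e.\ limit of $\{\nabla u_{k_j-1}\}$ with $\nabla u$: Schauder does not imply convergence of Picard iterates, and without a contraction estimate (which you do not attempt, and which is delicate here since $q>p$ so the fixed point is not unique) the iterates may cycle. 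At best you obtain that $u$ solves the equation with some $g=\lim|\nabla u_{k_j-1}|^q$ on the right, but not $g=|\nabla u|^q$. The paper sidesteps exactly this by invoking Schauder directly, which requires only that $S:E\to E$ be continuous and that $S(E)$ be precompact, not that the iterates converge.

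There are also two smaller issues. First, you assert equi-integrability of $\{|\nabla u_k|^q\}$ ``coming from the uniform $M^{1,s}$ bound.'' This does not follow: the capacitary condition controls $\int_K f\,dx$ by ${\rm Cap}_{1,s}(K)$, but sets of small Lebesgue measure need not have small ${\rm Cap}_{1,s}$-capacity (they can be thin and spread out), so a uniform $[\,\cdot\,]_{M^{1,s}}$ bound does not yield uniform absolute continuity. The paper obtains equi-integrability from a separate Orlicz-space gradient estimate (Lemma~\ref{compactness}, based on \cite{BR2,BW4}) together with de la Vall\'ee-Poussin. Second, you treat the linear capacitary estimate
$[|\nabla v|^q]_{M^{1,s}(\Om)}\lesssim[\mu]^{q/(p-1)}_{M^{1,s}}+[|{\bf g}|^{q/(p-1)}]_{M^{1,s}}$
for data $\mu+{\rm div}\,{\bf g}$ as available machinery from \cite{Ph5,QH4,QH5}. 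Those works treat pure measure data; handling the combined measure-plus-divergence datum is precisely the content of the paper's Lemma~\ref{interm}, which first converts $\mu$ into a gradient field ${\bf h}_\mu$ via the Green function, applies the weighted $A_1$ gradient estimate of \cite{M-P3} together with Rychkov's $A_1^{\rm loc}$-extension to obtain a \emph{pointwise} Bessel-potential bound, and then closes the loop with the Maz'ya--Verbitsky inequality ${\bf G}_1[{\bf G}_1(|\mu|)^{q/(p-1)}]\lesssim[\mu]^{(q-p+1)/(p-1)}_{M^{1,s}}{\bf G}_1(|\mu|)$. The pointwise Bessel-potential formulation is not merely cosmetic: it is what makes the invariant set $E$ convex and the nonlinear map $S$ self-mapping, and it is what feeds into the equi-integrability check for compactness. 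Your recursion in terms of the scalar norm $a_k=[|\nabla u_k|^q]_{M^{1,s}}$ correctly captures the smallness bookkeeping, but it does not by itself give the pointwise domination on which the paper's compactness and limiting arguments rest.
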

 
 \begin{remark}
 	Under a slightly different condition on $\mathcal{A}(x,\xi)$, it is possible to use the results of \cite{AP00, AP01} and the method of this paper to extend  Theorem \ref{main-Ric} to the end-point case $q=p$. However, this case has been treated  in \cite{AP1} by  using a different method (see also \cite{AP2}). 
 \end{remark}
 
 The notion of   $(\delta, R_0)$-Reifenberg flat domains mentioned in Theorem \ref{main-Ric} is made precise by the following definition.  
 \begin{definition}\label{defiRei}
 	Given $\delta\in (0,1)$ and $R_0>0$, we say that $\Omega$ is a $(\delta, R_0)$-Reifenberg flat domain if for every $x_0\in \partial \Omega$
 	and every $r\in (0, R_0]$, there exists a
 	system of coordinates $\{ y_{1}, y_{2}, \dots,y_{n}\}$,
 	which may depend on $r$ and $x_0$, so that  in this coordinate system $x_0=0$ and that
 	\[
 	B_{r}(0)\cap \{y_{n}> \delta r \} \subset B_{r}(0)\cap \Omega \subset B_{r}(0)\cap \{y_{n} > -\delta r\}.
 	\]
 \end{definition}

Examples of such domains   include those with $C^1$ boundaries or Lipschitz domains with
sufficiently small Lipschitz constants. They also  include certain domains  with fractal boundaries.

On the other hand, the $(\delta, R_0)$-BMO condition  imposed on $\mathcal{A}(x,\xi)$  allows  it to have  small jump discontinuities in the $x$-variable. 
More precisely, given two  positive numbers $\delta$ and $R_0$, we say that $\mathcal{A}({x, \xi})$ satisfies the $(\delta, R_0)$-BMO condition if
\begin{equation*}
[\mathcal{A}]_{R_0}:=\sup_{y\in\RR^n, \, 0<r\leq R_0 }  \fint_{B_{r}(y)}\Upsilon(\mathcal{A},
B_{r}(y))(x) dx  \leq \delta,
\end{equation*}
where  for each ball $B=B_r(y)$ we let
\[
\Upsilon(\mathcal{A},B)(x) := \sup_{\xi \in \mathbb{R}^{n}\setminus \{0\}} \frac{|\mathcal{A}({x, \xi}) - \overline{\mathcal{A}}_{B}({\xi})|}{|\xi|^{p-1}},
\]
with $\overline{\mathcal{A}}_{B}(\xi) = \fint_{B}\mathcal{A}(x, \xi)dx$.
Thus one can think of the $(\delta, R_0)$-BMO condition as  an appropriate substitute for
the Sarason VMO  condition.

 The second  main result of the paper is to treat \eqref{Riccati11} for the case  $p>\frac{3n-2}{2n-1}$, $p-1<q<1$,  and  $\sigma$ is a signed measure compactly supported in $\Om$.
 This extends the  results of \cite{QH4} to the sublinear range $p-1<q<1$, which cannot be dealt with by the method of \cite{QH4} due to  the lack of convexity. However, here we  assume that $\mathcal{A}(x,\xi)$ is H\"older
 continuous in the $x$-variable, i.e.,
 \begin{equation}\label{Holder}
 |\mathcal{A}(x,\xi)-\mathcal{A}(x_0,\xi)|\leq \Lambda |x-x_0|^{\theta} |\xi|^{p-1}
 \end{equation}
 for some $\theta\in (0,1)$ and all $x, x_0, \xi\in\RR^n$. We note that   this regularity assumption can be relaxed  by using a weaker Dini's condition as in \cite{HP}. 
 Moreover, for $\Om$ we further assume the following integrability condition
  (besides the $(\delta, R_0)$-Reifenberg flatness condition):
  \begin{equation}\label{integcond}
  \int_\Om d(x)^{-\epsilon_0} dx <+\infty
  \end{equation}
for some $\epsilon_0>0$. Here $d(x)$ is the distance from $x$ to $\partial\Om$, i.e.,
$d(x)=\inf\{|x-y|: y\in\partial\Om\}$. It is not clear to us if the  $(\delta, R_0)$-Reifenberg flatness condition for a sufficiently small $\delta$ will imply 
\eqref{integcond}. Note that $\eqref{integcond}$  holds (even with any $0<\epsilon_0<1$) for any bounded Lipschitz domain. More generally, $\eqref{integcond}$
 holds for some  $\epsilon_0>0$ provided we can find an $\epsilon>0$ such that  
$$ \left| \{x\in \Omega: \tau <d(x) \leq 2\tau\} \right| \leq C \tau^{\epsilon}$$
holds for all small $\tau>0$.

\begin{theorem}\label{second-main-Ric}
	Let $p>\frac{3n-2}{2n-1}$, $p-1<q<1$, $R_0>0$, and assume that  $\mathcal{A}$ satisfies \eqref{sublinear}, \eqref{monotone}, and \eqref{Holder}.
	Suppose that \eqref{integcond} holds for an $\epsilon_0>0$ and that	$\om\in M^{1,\, \frac{q}{q-p+1}}(\Om)$ with ${\rm supp}(\om)\Subset\Om$.
	Then there exists a constant  $\delta=\delta(n, p,  \Lambda, q, \epsilon_0)\in (0, 1)$ such that the following holds. If $\Om$ is $(\delta, R_0)$-Reifenberg flat, then there exists a positive  constant
	$$c_0=c_0(n, p, \Lambda, q, \theta, \epsilon_0, {\rm diam}(\Om), {\rm diam}(\Om)/R_0, {\rm dist}({\rm supp}(\om),\partial\Om))$$ such that whenever
	\begin{equation}\label{c0forom}
	[\om]^{\frac{q}{p-1}}_{M^{1, \frac{q}{q-p+1}}(\Om)}\leq c_0,
	\end{equation}
	there exists a  renormalized  solution $u$, with $|\nabla u|^q \in M^{1,\, \frac{q}{q-p+1}}(\Om)$, to the Riccati type equation
	\begin{eqnarray}\label{Riccati2}
	\left\{ \begin{array}{rcl}
	-{\rm div}\,\mathcal{A}(x, \nabla u)&=& |\nabla u|^q + \om  \quad \text{in} ~\Omega, \\
	u&=&0  \quad \text{on}~ \partial \Omega.
	\end{array}\right.
	\end{eqnarray}
\end{theorem}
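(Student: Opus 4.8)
The plan is to construct the solution by a fixed-point / successive-approximation scheme, exploiting the sublinearity $p-1<q<1$ to obtain a genuine contraction-type estimate rather than relying on convexity as in \cite{QH4}. Concretely, one first establishes a linear solvability statement: given a datum $\mu\in M^{1,\frac{q}{q-p+1}}(\Om)$ compactly supported in $\Om$ with $[\mu]$ small, the problem $-{\rm div}\,\mathcal{A}(x,\nabla w)=\mu$ in $\Om$, $w=0$ on $\partial\Om$, has a renormalized solution obeying the capacitary gradient bound
\begin{equation*}
[|\nabla w|^q]_{M^{1,\frac{q}{q-p+1}}(\Om)}\leq C\,[\mu]^{\frac{q}{p-1}}_{M^{1,\frac{q}{q-p+1}}(\Om)}+C\,[\mu]^{\frac{q}{p-1}-1}_{M^{1,\frac{q}{q-p+1}}(\Om)}\cdot(\text{lower order}),
\end{equation*}
with the Reifenberg-flatness of $\partial\Om$, the $(\delta,R_0)$-BMO (here H\"older \eqref{Holder}) regularity of $\mathcal{A}$, and the integrability condition \eqref{integcond} entering precisely to control the solution up to the boundary in the low-$p$ range $p>\frac{3n-2}{2n-1}$; this is the quantitative potential estimate coming from \cite{QH4,QH5} adapted to the present setting. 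The exponent $\frac{q}{p-1}$ on the right is exactly what makes the scheme work when $q<1$: dividing by $p-1<q<1$ produces a power strictly less than what a linear estimate would give after one pins down scaling.

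Next I would set up the iteration: let $u_0=0$ and define $u_{k+1}$ as the renormalized solution of $-{\rm div}\,\mathcal{A}(x,\nabla u_{k+1})=|\nabla u_k|^q+\om$ in $\Om$, $u_{k+1}=0$ on $\partial\Om$. Since $|\nabla u_k|^q\in M^{1,\frac{q}{q-p+1}}(\Om)$ and $\om\in M^{1,\frac{q}{q-p+1}}(\Om)$ with $[\om]$ small, the linear step applies, provided one checks that the quantity $t_k:=[|\nabla u_k|^q]_{M^{1,\frac{q}{q-p+1}}(\Om)}$ stays inside a small ball. The recursion for $t_k$ reads $t_{k+1}\leq C(t_k+[\om])^{\frac{q}{p-1}}$; because $\tfrac{q}{p-1}>1$, the map $t\mapsto C(t+[\om])^{q/(p-1)}$ has a small attracting fixed point as soon as $[\om]\leq c_0$ is small enough, giving a uniform bound $t_k\leq T_*$ for all $k$. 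So one obtains a uniformly bounded sequence $\{u_k\}$ in $W^{1,q}_0(\Om)$ with uniformly controlled capacitary gradient norms.

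To pass to the limit, I would extract (up to subsequence) a limit $u$ with $\nabla u_k\to\nabla u$ a.e.\ (using the by-now standard pointwise convergence of gradients for renormalized solutions with measure data, as in Dal Maso--Murat--Orsina--Prignet and the references used in \cite{QH4}), show $|\nabla u_k|^q\to|\nabla u|^q$ in $L^1(\Om)$ via equi-integrability coming from the uniform $M^{1,\frac{q}{q-p+1}}$ bound, and conclude that $u$ is a renormalized solution of \eqref{Riccati2} with the desired gradient estimate, the latter surviving in the limit by lower semicontinuity of the capacitary norm. The compact support of $\om$ and \eqref{integcond} are used here to guarantee the stability of renormalized solutions and the interior-to-boundary decay needed when $p$ is close to $\frac{3n-2}{2n-1}$.

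The main obstacle I anticipate is the linear solvability estimate in the low-$p$ range up to the boundary: establishing the good-$\lambda$ / comparison estimates that yield $[|\nabla w|^q]_{M^{1,\frac{q}{q-p+1}}}\lesssim[\mu]^{\frac{q}{p-1}}$ when $\frac{3n-2}{2n-1}<p\leq 2-\frac1n$ requires the second-order / gradient potential techniques of \cite{QH4,QH5}, and carrying these to the boundary forces both the Reifenberg flatness (with $\delta$ small, chosen after the interior exponents are fixed) and the somewhat unusual hypothesis \eqref{integcond}, which compensates for the possible degeneracy of the distance function near $\partial\Om$ in the weighted estimates. A secondary technical point is verifying that each iterate $u_{k+1}$ is uniquely determined as a renormalized solution and that the a.e.\ convergence of gradients is not destroyed by the nonlinear right-hand side; this is handled by the usual truncation arguments but must be stated carefully since $q<1$ makes $|\nabla u_k|^q$ only barely better than $L^1$.
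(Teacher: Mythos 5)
Your approach is genuinely different from the paper's, and it has two gaps that I do not see how to close.

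First, the ``linear solvability step'' you posit --- a global estimate of the form $[|\nabla w|^q]_{M^{1,\frac{q}{q-p+1}}(\Om)}\lesssim[\mu]^{q/(p-1)}_{M^{1,\frac{q}{q-p+1}}(\Om)}$ for an arbitrary finite measure $\mu$ on $\Om$ in the range $\frac{3n-2}{2n-1}<p\leq 2-\frac1n$ --- is not something the references supply, and it is not what the paper uses. What the paper actually invokes (Lemma~\ref{HP-point}, from \cite{HP}) is a \emph{pointwise} estimate $|\nabla u|\leq C\,\mathbf{T}[|\mu|]$, where $\mathbf{T}[\nu]=d(\cdot)^{-\kappa}\mathbf{P}^{R}[\nu]\chi_{\Om}$ carries an explicit $d(x)^{-\kappa}$ degeneration near $\partial\Om$. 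The whole architecture of the proof --- the compact support of $\om$, the integrability hypothesis \eqref{integcond}, and the choice $0<\kappa\leq\epsilon_0/(4n)$ --- is tailored to control this boundary weight and to prove the bootstrap inequality $\mathbf{T}[\mathbf{T}[|\om|]^q]\lesssim c_0^{\ast}\,\mathbf{T}[|\om|]$. Once you abandon the pointwise picture in favor of a capacitary-norm-only recursion, you lose exactly the mechanism that makes \eqref{integcond} and ${\rm supp}(\om)\Subset\Om$ relevant, and you are left asserting, rather than proving, the linear step.

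Second, and independently, your passage to the limit is not sound. You extract a subsequence $\{u_{k'}\}$ with $\nabla u_{k'}\to\nabla u$ a.e.\ and argue that $u$ solves the equation. But $u_{k'+1}$ is the renormalized solution with right-hand side $|\nabla u_{k'}|^q+\om$; the a.e.\ limit of $\nabla u_{k'}$ tells you the data of the \emph{shifted} sequence converge, and stability of renormalized solutions then gives a limit of $\{u_{k'+1}\}$, not of $\{u_{k'}\}$ --- these need not coincide. A uniform bound $t_k\leq T_\ast$ does not make the Picard iterates converge; you would need either a genuine contraction in some metric (which you neither establish nor, to my knowledge, is straightforward here, since the equation is nonlinear in $\nabla u$ and $q<1$ makes $t\mapsto t^q$ only H\"older near the diagonal), or a fixed-point theorem. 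The paper uses Schauder on the convex set $V=\{v:|\nabla v|\chi_{\Om}\leq N\,\mathbf{T}[|\om|]\}$ --- note the constraint is on $|\nabla v|$ itself, not $|\nabla v|^q$, which is precisely what restores convexity for $q<1$ and circumvents the obstruction the paper attributes to \cite{QH4}. Your iterative scheme sidesteps convexity in spirit, but without a contraction or a convex invariant set you cannot conclude.
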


We refer to \cite{DMOP} for the notion of renormalized solutions. Note that in the case $p\leq 2-\frac{1}{n}$ the gradients of such solutions should be interpreted  appropriately.

\begin{remark} It is worth mentioning that  the case $p>2-\frac{1}{n}$ and $p-1<q<1$, which is a sub-critical case, has been addressed in \cite{GMP, PhJM} by  different methods that require no compact support condition on $\om$. However, our proof of Theorem \ref{second-main-Ric} produces a solution to \eqref{Riccati2} whose gradient is well controlled pointwise. Moreover, our proof also works in the super-linear case $q\geq 1$ that was considered  earlier in \cite{QH4}.
\end{remark}	

\section{Proof of Theorems \ref{necdis} and \ref{main-Ric}}

In this section we prove Theorems \ref{necdis} and \ref{main-Ric}. We begin with the proof of Theorem \ref{necdis}.
\begin{proof}[{\bf Proof of Theorem \ref{necdis}}.]  Here we employ an idea of \cite{JMV1,JMV2} that treated the case $q=p$. 
	Let $B$ is a ball of radius ${\rm diam}(\Om)$ containing $\Om$ and let $G(x,y)$ be the Green function with zero boundary condition associated to $-\Delta$ on $B$.
	Then it follows that  $$|\nabla u(x)|^q=-{\rm div}\,  \int_{B}\nabla_x G(x,y) |\nabla u(y)|^q\chi_{\Om}(y) dy  \quad {\rm in~} \mathcal{D}'(\Om).$$

	Thus by \eqref{sigmaE} we have that   $\sigma = {\rm div}\, {\bf f}$ in $\mathcal{D}'(\Om)$ with
	$${\bf f}=-\mathcal{A}(x, \nabla u) +\int_{B}\nabla_x G(x,y) |\nabla u(y)|^q\chi_{\Om}(y) dy.$$
	
	Note that by the first inequality in \eqref{sublinear} we find
	$$\left[|\mathcal{A}(x, \nabla u)|^{\frac{q}{p-1}}\right]^{\frac{p-1}{q}}_{M^{1,\, \frac{q}{q-p+1}}(\Om)} \leq \Lambda [|\nabla u|^q]^{\frac{p-1}{q}}_{M^{1,\, \frac{q}{q-p+1}}(\Om)}.$$

	On the other hand, using the pointwise estimate
	\begin{equation}\label{Greenest}
	|\nabla_x G(x, y)|\leq C(n, {\rm diam}(\Om)) |x-y|^{1-n} \qquad \forall x, y\in B, x\not=y,
	\end{equation}
	and \cite[Corollary 2.5]{Ph1} we obtain
	$$\left[\left|\int_{B}\nabla_x G(\cdot ,y) |\nabla u(y)|^q\chi_{\Om}(y) dy\right|^{\frac{q}{p-1}}\right ]^{\frac{p-1}{q}}_{M^{1, \frac{q}{q-p+1}}(\Om)}\leq C\, [|\nabla u|^q]_{M^{1, \, \frac{q}{q-p+1}}(\Om)}.$$
	
	These show that $|{\bf f}|^{\frac{q}{p-1}}\in M^{1, \, \frac{q}{q-p+1}}(\Om)$ with the estimate
	$$ \left[|{\bf f}|^{\frac{q}{p-1}}\right]^{\frac{p-1}{q}}_{M^{1, \frac{q}{q-p+1}}(\Om)}\leq C\left(   \left[|\nabla u|^q\right]^{\frac{p-1}{q}}_{M^{1,\, \frac{q}{q-p+1}}(\Om)} + \left[|\nabla u|^q \right]_{M^{1, \, \frac{q}{q-p+1}}(\Om)}\right).$$

	Finally, given any $\varphi\in C_0^{\infty}(\Om)$ we have
	\begin{eqnarray*}
		\left|\langle \sigma, |\varphi|^{\frac{q}{q-p+1}} \rangle\right| &=& \left| \int_{\Om}{\bf f}\cdot \nabla (|\varphi|^{\frac{q}{q-p+1}}) dx\right| \leq \frac{q}{q-p+1} \int_{\Om}  |{\bf f}| |\varphi|^{\frac{p-1}{q-p+1}} |\nabla \varphi|dx\\
		&\leq& \frac{q}{q-p+1} \left(\int_{\Om}  |{\bf f}|^{\frac{q}{p-1}} |\varphi|^{\frac{q}{q-p+1}} dx\right)^{\frac{p-1}{q}} \left(\int_{\Om} |\nabla \varphi|^{\frac{q}{q-p+1}}dx\right)^{\frac{q-p+1}{q}}\\
		&\leq& C \int_{\Om} |\nabla \varphi|^{\frac{q}{q-p+1}}dx.
	\end{eqnarray*}
	Here the last inequality follows since by  \eqref{trace} and Poincar\'e's inequality we have
	$$\int_{\Om}  |{\bf f}|^{\frac{q}{p-1}} |\varphi|^{\frac{q}{q-p+1}} dx \leq C({\rm diam}(\Om)) \int_{\Om} |\nabla \varphi|^{\frac{q}{q-p+1}}dx. $$
	
	Thus  \eqref{sigmatrace} is verified, which completes the proof of the theorem.
\end{proof}

In order to Theorem \ref{main-Ric}, we need the following equi-integrability result.

\begin{lemma}\label{compactness}	
	 For each $j=1,2,3, \dots$, let ${\bf f}_j \in L^{\frac{q}{p-1}}(\Om, \RR^n)$, $q>p$, and $u_j\in W^{1,q}_0(\Om)$ be the solution of  $${\rm div} \mathcal{A}(x, \nabla u)={\rm div}\, {\bf f}_{j} \quad {\rm in~} \Om.$$ 
	Assume that 
	$\{|{\bf f}_j|^{\frac{q}{p-1}}\}_{j}$ is a bounded and equi-integrable subset of  $L^1(\Omega)$.
	Then, there exists  $\delta=\delta(n,p,\Lambda, q)\in (0,1)$ such that if $\Omega$ is  $(\delta,R_0)$-Reifenberg flat  and $[\mathcal{A}]_{R_0}\le \delta$ for some $R_0>0$, then 
	the set $\{|\nabla u_j|^q\}_j$ is also a bounded and equi-integrable subset of  $L^1(\Omega)$.
\end{lemma}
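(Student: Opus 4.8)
The plan is to establish the equi-integrability of $\{|\nabla u_j|^q\}_j$ through a quantitative gradient estimate for the homogeneous equation $\mathrm{div}\,\mathcal{A}(x,\nabla u)=\mathrm{div}\,\mathbf{f}$ combined with a measure-theoretic argument based on level-set decompositions. First I would recall the basic energy estimate: testing the equation for $u_j$ against $u_j$ itself and using \eqref{sublinear}--\eqref{monotone} together with the Sobolev--Poincar\'e inequality gives a uniform bound $\int_\Omega |\nabla u_j|^p\,dx \le C\int_\Omega |\mathbf{f}_j|^{p/(p-1)}\,dx$, which is uniformly bounded by hypothesis. To upgrade this from $L^p$ to an equi-integrable bound in the higher exponent $q/(p-1)\cdot(p-1) = $ effectively controlling $|\nabla u_j|^q$, the key tool is a good-$\lambda$ / comparison-estimate machinery: on small balls, compare $u_j$ with the $\mathcal{A}$-harmonic replacement (or the replacement with frozen, averaged coefficients $\overline{\mathcal{A}}_B$), using the $(\delta,R_0)$-BMO smallness of $\mathcal{A}$ and the $(\delta,R_0)$-Reifenberg flatness to bound the oscillation of $\nabla u_j$ by the oscillation of $\mathbf{f}_j$ plus a small multiple of the average of $|\nabla u_j|$. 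This is exactly the Calder\'on--Zygmund-type scheme developed in \cite{Ph5, QH4} and related works, which yields the pointwise bound $\mathbf{M}(|\nabla u_j|^p)^{1/p} \le C\big(\mathbf{M}(|\mathbf{f}_j|^{p/(p-1)})\big)^{(p-1)/p}$ up to harmless error terms, where $\mathbf{M}$ is the (restricted) Hardy--Littlewood maximal function.

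Granting such a maximal-function domination, the second step is purely measure-theoretic. Equi-integrability of $\{|\mathbf{f}_j|^{q/(p-1)}\}_j$ in $L^1$ means $\sup_j \int_{\{|\mathbf{f}_j|^{q/(p-1)}>\lambda\}} |\mathbf{f}_j|^{q/(p-1)}\,dx \to 0$ as $\lambda\to\infty$; since the Hardy--Littlewood maximal operator maps the Dunford--Pettis / uniformly integrable class into itself in a suitable localized sense (one may invoke the characterization of equi-integrability via a common super-linear Orlicz bound, or directly use that $\mathbf{M}$ is of weak type $(1,1)$ and strong type $(s,s)$ for all $s>1$ to interpolate), the set $\{\mathbf{M}(|\mathbf{f}_j|^{p/(p-1)})^{q/p}\}_j$ is also bounded and equi-integrable in $L^1(\Omega)$. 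Composing with the pointwise comparison from the first step, $|\nabla u_j|^q \le C\,\mathbf{M}(|\mathbf{f}_j|^{p/(p-1)})^{q/p} + (\text{small error})$, and the small error term can be absorbed using the smallness of $\delta$, which gives the desired conclusion. One must be careful near $\partial\Omega$: the Reifenberg flatness lets one run the comparison argument in boundary balls as well, so the maximal function and all estimates are genuinely global on $\Omega$.

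I would organize the write-up as: (i) the global energy bound and the reduction to a Calder\'on--Zygmund estimate; (ii) the interior and boundary comparison estimates invoking \eqref{sublinear}--\eqref{monotone}, the $(\delta,R_0)$-BMO condition, and Reifenberg flatness, culminating in the level-set inequality of the form $|\{\mathbf{M}(|\nabla u_j|^p)>N\lambda\}\cap\Omega| \le \varepsilon|\{\mathbf{M}(|\nabla u_j|^p)>\lambda\}\cap\Omega| + |\{\mathbf{M}(|\mathbf{f}_j|^{p/(p-1)})>c\lambda\}\cap\Omega|$ for appropriate constants, with $\varepsilon$ small when $\delta$ is small; (iii) the transfer of equi-integrability from $\{|\mathbf{f}_j|^{q/(p-1)}\}$ through the maximal function to $\{|\nabla u_j|^q\}$, handling the error term by smallness of $\delta$.

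The main obstacle I anticipate is step (ii): producing the comparison estimate in a form that is uniform in $j$ and robust enough to feed the equi-integrability argument rather than merely a fixed $L^{q/(p-1)}$-norm bound. In particular, one needs the error term multiplying the average of $|\nabla u_j|^p$ to be controlled by a quantity that is itself equi-integrable, which is why the good-$\lambda$ formulation (comparing super-level sets) is essential — it converts a multiplicative small constant into a genuine gain. A secondary technical point is the degenerate/singular behavior of $\mathcal{A}$ when $p\neq 2$, which forces the use of the intrinsic/Caccioppoli estimates adapted to the $p$-structure and, for $p<2$, careful handling of the reverse-H\"older-type gains; these are available in the cited literature \cite{Ph5, QH4, QH5} and can be quoted, but assembling them into an equi-integrability statement is where the care lies.
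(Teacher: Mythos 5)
Your approach is workable in spirit but substantially longer than what the paper does, and one of your two suggested routes has a genuine gap. The paper's proof is short: it invokes the de la Vall\'ee-Poussin characterization of equi-integrability to obtain a convex, superlinear Young function $G$ with the $\Delta_2$ property such that $\sup_j\int_\Omega G(|\mathbf{f}_j|^{q/(p-1)})\,dx<\infty$; it then defines $\Phi(t):=G(t^{q/p})$, checks that $\Phi$ is a Young function satisfying both $\Delta_2$ \emph{and} $\nabla_2$ (the latter using $q>p$), and cites the Orlicz-space gradient estimate of Byun--Ryu \cite{BR2} to get $\sup_j\int_\Omega\Phi(|\nabla u_j|^p)\,dx=\sup_j\int_\Omega G(|\nabla u_j|^q)\,dx<\infty$. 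Equi-integrability of $\{|\nabla u_j|^q\}$ then follows from de la Vall\'ee-Poussin again. So the entire good-$\lambda$/comparison machinery you propose to rebuild is already packaged into \cite{BR2}; the only genuinely new content of the lemma is the observation that $G\mapsto G(t^{q/p})$ takes a moderate-growth Orlicz function to one with $\Delta_2$ and $\nabla_2$, precisely the hypotheses of \cite{BR2}.

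Two specific issues in your write-up. First, your claim that the Hardy--Littlewood maximal operator ``maps the uniformly integrable class into itself,'' justified via weak $(1,1)$ and strong $(s,s)$ interpolation, is false as stated: $\mathbf{M}$ is not bounded on $L^1$, and a bounded equi-integrable family in $L^1$ is not, in general, mapped to one. The transfer of equi-integrability through $\mathbf{M}$ does work here, but only because one is looking at $\mathbf{M}(g_j)^{q/p}$ with $q/p>1$, and the correct mechanism is precisely the Orlicz-space one: $\Delta_2$ together with $\nabla_2$ is needed for $\mathbf{M}$ to be bounded on $L^\Phi$. You mention the Orlicz characterization as an alternative, but you never verify $\nabla_2$ for the composed function, and without it the interpolation route does not close. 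Second, your asserted pointwise domination $\mathbf{M}(|\nabla u_j|^p)^{1/p}\le C\,\mathbf{M}(|\mathbf{f}_j|^{p/(p-1)})^{(p-1)/p}$ does not hold; what the comparison scheme delivers is the level-set inequality you write later, and one then integrates against the Young function. If you want to present a self-contained argument rather than cite \cite{BR2}, you should drop the interpolation and the pointwise domination claims and run the good-$\lambda$ iteration directly against $\Phi$, verifying both $\Delta_2$ and $\nabla_2$ as the paper does.
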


\begin{proof}
	By de la Vall\'ee-Poussin Lemma on equi-integrability  we can find an increasing  and convex  function 
	$G:[0,\infty)\rightarrow [0,\infty)$ with $G(0)=0$ and $\lim_{t\rightarrow \infty} \frac{G(t)}{t}=\infty,$
	such that 
	\begin{equation*}
	\sup_{j} \int_{\Om} G(|{\bf f}_j|^{\frac{q}{p-1}}) dx \leq C.
	\end{equation*}
	Moreover, we may assume that $G$ satisfies a $\Delta_2$ (moderate growth) condition (see, e.g., \cite{Mey}): there exists $c_1>1$ such that 
	$$G(2t)\leq c_1\, G(t)\qquad \forall t\geq 0.$$ It follows that the function  $\Phi(t):=G(t^{q/p})$  also satisfies a $\Delta_2$  condition since
	$$\Phi(2t)=G(2^{q/p} t^{q/p})\leq G(2^{[q/p]+1} t^{q/p})\leq (c_1)^{[q/p]+1}\Phi(t),$$
	where $[q/p]$ is the integral part of $q/p$. 
	
	On the other hand, as $G$ is convex and $G(0)=0$, for $c_2=2^{\frac{p}{q-p}}>1$ we have
	$$\Phi(t)=G( c_2^{-q/p}(c_2t)^{q/p})\leq c_2^{-q/p} G((c_2t)^{q/p})=\frac{1}{2c_2} \Phi(c_2t).$$
	In other words, $\Phi$ satisfies a $\nabla_2$ condition.
	
	Also, by the above properties of $G$ we have that $\Phi$ is an 
	increasing  and convex Young function, i.e., 
	$$\Phi(0)=0, \quad \lim_{t\rightarrow 0^{+}} \frac{\Phi(t)}{t}=0,\quad {\rm and} \quad \lim_{t\rightarrow \infty} \frac{\Phi(t)}{t}=\infty.$$
	
	With these properties of $\Phi$, by  the main result of \cite{BR2} (see also \cite{BW4}),
	we have that 
	\begin{equation*}
	\sup_{j} \int_{\Om} \Phi(|\nabla u_j|^{p}) dx = \sup_{j} \int_{\Om} G(|\nabla u_j|^{q}) dx\leq C.
	\end{equation*}
	Here the constant $C$ depends   only on $n, p, q, G, \Lambda, \Om$, and $\delta$.
	Hence by de la Vall\'ee-Poussin Lemma, it follows that the sequence $\{|\nabla u_j|^{q}\}_{j}$ is equi-integrable in $\Om$.
\end{proof}	

We now recall that $G_1$ is the Bessel kernel of order $1$. For any nonnegative measure $\nu$, we define a  Bessel potential of $\nu$ by 
$${\bf G}_{1}(\nu)(x):=G_1*\nu(x)=\int_{\RR^n} G_1(x-y) d\nu(y), \quad x\in\RR^n.$$

\begin{lemma}\label{interm}
	Let $q>p>1$ and suppose that
	$\mu\in M^{1,\, \frac{q}{q-p+1}}(\Om)$ and that ${\bf g}$ is a vector field on $\Om$ such that $|{\bf g}|^{\frac{q}{p-1}} \in M^{1,\, \frac{q}{q-p+1}}(\Om)$.
	There exists a constant  $\delta=\delta(n, p,  \Lambda, q)\in (0, 1)$ such that if  $\Om$ is $(\delta, R_0)$-Reifenberg flat and 
	$[\mathcal{A}]_{R_0}\leq \delta$ for some $R_0>0$ then 	the equation
	\begin{eqnarray}\label{U-equa}
		\left\{ \begin{array}{rcl}
			{\rm div}\,\mathcal{A}(x, \nabla U)&=& \mu + {\rm div}\, {\bf g}  \quad \text{in} ~\Omega, \\
			u&=&0  \quad \text{on}~ \partial \Omega
		\end{array}\right.
	\end{eqnarray}
	admits a unique solution $U\in W^{1,\, q}_0(\Om)$ with
	\begin{equation}\label{pointwiseB}
	{\bf G}_{1}(|\nabla U|^q)\leq C\, [{\bf G}_{1}(|{\bf g}|^{\frac{q}{p-1}}) + [\mu]^{\frac{q-p+1}{p-1}}_{M^{1,\frac{q}{q-p+1}}(\Om)}\  {\bf G}_{1}(|\mu|) ] \quad {\rm a.e.~in~} \RR^n.
	\end{equation}
	Here  $U$, ${\bf g}$, and $\mu$ are extended  by zero outside $\Om$. The constant $C$ in \eqref{pointwiseB} depends only on $n, p, \Lambda, q, {\rm diam}(\Om)$,  and ${\rm diam}(\Om)/R_0$.
\end{lemma}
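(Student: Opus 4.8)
The plan is to solve \eqref{U-equa} by approximation and to obtain the pointwise bound \eqref{pointwiseB} from a gradient potential estimate for the linearized (or, rather, for the quasilinear but \emph{non-Riccati}) problem, combined with the characterization of the space $M^{1,\frac{q}{q-p+1}}(\Om)$ via trace inequalities. First I would reduce to the case where $\mu$ and ${\bf g}$ are smooth and compactly supported in $\Om$ (regularizing by convolution and truncation), so that \eqref{U-equa} has a genuine weak solution $U\in W^{1,p}_0(\Om)$ by the standard monotone-operator theory; uniqueness is immediate from the strict monotonicity \eqref{monotone}. The heart of the matter is the a priori estimate \eqref{pointwiseB}, which should follow from a good-$\lambda$/comparison argument on Reifenberg-flat domains: one compares $\nabla U$, on each ball, with the gradient of the solution of the homogeneous equation ${\rm div}\,\overline{\mathcal A}_{B}(\nabla w)=0$ with the same boundary data, using the $(\delta,R_0)$-BMO smallness of $\mathcal A$ and the $(\delta,R_0)$-Reifenberg flatness of $\Om$ to control the oscillation, exactly in the spirit of \cite{55Ph2-2, QH4}. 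This yields a bound of Wolff-potential type, which for $p>1$ (here in fact $q>p$, so $p-1<q$ puts us in a favorable range) can be upgraded to the Riesz/Bessel-potential bound
\[
|\nabla U|^q \le C\,\big[\,\mathbf{I}_1(|{\bf g}|^{\frac{q}{p-1}}) + \big(\mathbf{I}_1(|\mu|)\big)^{\frac{q}{p-1}}\,\big]
\]
in the appropriate local form, and then, using that $\mu$ and ${\bf g}$ are compactly supported in a bounded domain, the Riesz potentials can be replaced by Bessel potentials $\mathbf{G}_1$ up to constants depending on ${\rm diam}(\Om)$.

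Next I would convert this into the exact form \eqref{pointwiseB}. The term involving ${\bf g}$ already appears as $\mathbf{G}_1(|{\bf g}|^{q/(p-1)})$ after applying the sub-multiplicativity/normalization; for the $\mu$-term one writes $\big(\mathbf{G}_1(|\mu|)\big)^{q/(p-1)} = \mathbf{G}_1(|\mu|)^{\frac{q-p+1}{p-1}}\cdot \mathbf{G}_1(|\mu|)$ and controls the first factor by $[\mu]^{\frac{q-p+1}{p-1}}_{M^{1,\frac{q}{q-p+1}}(\Om)}$ using the standard pointwise bound $\mathbf{G}_1(|\mu|)\le C\,[\mu]_{M^{1,s}(\Om)}$ valid for $s=\frac{q}{q-p+1}$ (this is where the Fefferman–Phong/trace characterization recalled after \eqref{trace} enters: membership in $M^{1,s}$ forces $\mathbf{G}_1(|\mu|)\in L^\infty$ with the displayed norm). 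That gives \eqref{pointwiseB} with $C$ of the stated dependence. The same inequality, integrated against a test function and combined with the trace inequality \eqref{trace} applied to both $|{\bf g}|^{q/(p-1)}$ and $|\mu|$, shows $|\nabla U|^q\in M^{1,\frac{q}{q-p+1}}(\Om)$, hence in particular $|\nabla U|^q\in L^1(\Om)$, which upgrades $U$ from $W^{1,p}_0$ to $W^{1,q}_0(\Om)$ (recall $q>p$).

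Finally, to pass from the smooth data to general $\mu\in M^{1,s}(\Om)$ and ${\bf g}$ with $|{\bf g}|^{q/(p-1)}\in M^{1,s}(\Om)$, I would take approximating sequences $\mu_j,{\bf g}_j$ with uniformly bounded $M^{1,s}$-data (a routine regularization keeps the norms under control, e.g. via the trace-inequality characterization), let $U_j$ be the corresponding solutions, and use the \emph{uniform} bound \eqref{pointwiseB} together with Lemma \ref{compactness}: the right-hand side of \eqref{pointwiseB} is, by $\mathbf{G}_1(|{\bf g}_j|^{q/(p-1)})\in L^1$ and $\mathbf{G}_1(|\mu_j|)\in L^1$ with the Morrey/capacitary bounds, dominated by a fixed $L^1$ function up to equi-integrable remainders, so $\{|\nabla U_j|^q\}$ is bounded and equi-integrable in $L^1(\Om)$. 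Then $\nabla U_j\to \nabla U$ a.e.\ (after passing to a subsequence, using monotonicity to identify the limit of $\mathcal A(x,\nabla U_j)$, as in \cite{BG}), Vitali's theorem gives $U_j\to U$ in $W^{1,q}_0(\Om)$, and Fatou passes \eqref{pointwiseB} to the limit. I expect the main obstacle to be the comparison/good-$\lambda$ step producing the potential estimate uniformly up to the boundary on a merely Reifenberg-flat domain with only BMO-small coefficients — that is the technically heaviest part — but since the analogous estimates are already established in \cite{55Ph2-2} for $p>2-\tfrac1n$ and in \cite{QH4} for $\frac{3n-2}{2n-1}<p\le 2-\tfrac1n$, and here $q>p$ (so no lower-range pathology beyond what those references cover), it should go through with the cited machinery.
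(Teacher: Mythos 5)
Your proposal has a fundamental gap at the core step, and it misreads what inequality \eqref{pointwiseB} actually asserts.

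You propose to prove a \emph{pointwise} bound of the form
\[
|\nabla U(x)|^q \le C\Big[\mathbf{I}_1\big(|{\bf g}|^{\frac{q}{p-1}}\big)(x) + \big(\mathbf{I}_1(|\mu|)(x)\big)^{\frac{q}{p-1}}\Big],
\]
and then deduce \eqref{pointwiseB} by applying $\mathbf{G}_1$ and absorbing $\mathbf{G}_1(|\mu|)^{\frac{q-p+1}{p-1}}$ into the constant via $\mathbf{G}_1(|\mu|)\in L^\infty$. Neither half of this works. First, there is no pointwise potential bound for $\nabla U$ in terms of the divergence-form datum ${\bf g}$: the map ${\bf g}\mapsto \nabla U$ solving ${\rm div}\,\mathcal A(x,\nabla U)={\rm div}\,{\bf g}$ is of singular-integral (Calder\'on--Zygmund) type and cannot be controlled pointwise by ${\bf g}$, only in $L^q$ or weighted $L^q$. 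The well-known Duzaar--Mingione / Kuusi--Mingione pointwise gradient potential estimates and their Reifenberg-flat analogues in \cite{Ph5,55Ph2-2,QH4} are for \emph{measure} data and give Wolff potential bounds on $|\nabla u|$; they do not produce a pointwise inequality $|\nabla U|\le C |{\bf g}|^{1/(p-1)}$ or $|\nabla U|\le C\mathbf{I}_1(|{\bf g}|^{q/(p-1)})^{1/q}$, and the latter is genuinely false even for the Laplacian. Second, membership of $\mu$ in $M^{1,\frac{q}{q-p+1}}(\Om)$ does \emph{not} imply $\mathbf{G}_1(|\mu|)\in L^\infty$; e.g.\ surface measure on a piece of hyperplane lies in $M^{1,s}$ for $s>1$ but has logarithmically unbounded Bessel potential near the surface. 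The correct statement is the Maz'ya--Verbitsky theorem \cite[Theorem~1.2]{MV}, which is a smoothed, iterated inequality: $\mathbf{G}_1\!\left[\mathbf{G}_1(|\mu|)^{\frac{q}{p-1}}\right]\le C[\mu]^{\frac{q-p+1}{p-1}}_{M^{1,\frac{q}{q-p+1}}(\Om)}\mathbf{G}_1(|\mu|)$. Crucially, the extra outer $\mathbf{G}_1$ on the left cannot be dropped.

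This is precisely why \eqref{pointwiseB} bounds $\mathbf{G}_1(|\nabla U|^q)$ rather than $|\nabla U|^q$ itself. The paper's route is essentially the dual of yours: write $\mu=-{\rm div}\,{\bf h}_\mu$ with ${\bf h}_\mu=\nabla_x G*\mu$ (so $|{\bf h}_\mu|\le C\mathbf{G}_1(|\mu|)$), invoke the \emph{weighted} $L^q$ Calder\'on--Zygmund estimate on Reifenberg-flat domains from \cite[Theorem~1.10]{M-P3},
\[
\int_\Om |\nabla U|^q\,w\,dx \le C\int_\Om |{\bf g}-{\bf h}_\mu|^{\frac{q}{p-1}}\,w\,dx \qquad (w\in\mathbf{A}_1),
\]
and then test it with $w\approx\mathbf{G}_1(g)$ for nonnegative bounded compactly supported $g$. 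After using Rychkov's lemma \cite[Lemma~1.1]{Ryc} to upgrade the local $\mathbf{A}_1$ property of $\mathbf{G}_1(g)$ to a genuine $\mathbf{A}_1$ weight with controlled constant, Fubini and arbitrariness of $g$ convert the weighted $L^q$ bound into the pointwise bound on $\mathbf{G}_1(|\nabla U|^q)$, after which \cite[Theorem~1.2]{MV} applies directly. In short: the two devices your argument lacks are the weighted CZ estimate plus the dualization via $\mathbf{A}_1$ weights of the form $\mathbf{G}_1(g)$, and the iterated (rather than unsmoothed) Maz'ya--Verbitsky inequality. Without them the proof does not close.
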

\begin{proof}
	Again, let $B$ is a ball of radius ${\rm diam}(\Om)$ containing $\Om$ and let $G(x,y)$ be the Green function with zero boundary condition associated to $-\Delta$ on $B$.
	Then we can write $\mu=-{\rm div}\, {\bf h_{\mu}}$ in $\mathcal{D}'(\Om),$
	where ${\bf h_{\mu}}$ is a gradient vector field on $B$ given by
	\begin{equation}\label{Gree}
	{\bf h_{\mu}}(x)=\int_{B}\nabla_x G(x,y)d\mu(y). 
	\end{equation}
	
	In what follows, we say that a function $w\in {\bf A}_1$ if $w\in L^1_{\rm loc}(\RR^n)$, $w\geq 0$, and  
	$$\sup_{r>0}\fint_{B_r(x)} w(y)dy \leq A w(x) \quad {\rm for~ a.e.~} x\in\RR^n. $$
	The least possible constant $A$ in the above inequality is called the ${\bf A}_1$ constant of $w$ and is denoted by $[w]_{\mathbf{A}_1}$.\\

	Note then that  by \cite[Theorem 1.10]{M-P3}, 	for any weights $w\in {\bf A}_1$,  there exists a constant  $\delta=\delta(n, p,  \Lambda, q,[w]_{\mathbf{A}_1})\in (0, 1)$ such that if  $\Om$ is $(\delta, R_0)$-Reifenberg flat and 
	$[\mathcal{A}]_{R_0}\leq \delta$ then 	 equation \eqref{U-equa}
		admits a unique solution $U\in W^{1,\, q}_0(\Om)$ such that 
	\begin{equation}\label{Uw}
	\int_{\Om}|\nabla U|^q w dx \leq C \int_{\Om} |{\bf g}-{\bf h_\mu}|^{\frac{q}{p-1}} w dx.
	\end{equation}
Moreover, the constant $C$ in \eqref{Uw} depends on $w$ only through $[w]_{\mathbf{A}_1}$. 
	
	We now observe from the asymptotic behavior of $G_1$ (see \cite[Section 1.2.4]{AH}) that the function  
	$w(x)={\bf G}_1(g)(x)$,	where $g$ is any nonnegative and bounded function with compact support, satisfies the following \emph{local} ${\bf A}_1$ condition
	$$\sup_{0<r\leq 1}\fint_{B_r(x)} w(y)dy \leq A w(x) \quad {\rm for~ a.e.~} x\in\RR^n. $$
	The constant $A$ is independent of $g$. Thus by \cite[Lemma 1.1]{Ryc}  there exists a weight $\overline{w}\in A_1$ such that $w=\overline{w}$ in $B$ and 
	$[\overline{w}]_{{\bf A}_1}\leq C=C(n,{\rm diam}(\Om),A)$.
	Then using $\overline{w}$ in \eqref{Uw} and applying  Fubini's Theorem   we find
	
	$$\int_{\RR^n} {\bf G}_{1}(|\nabla U|^q\chi_{\Om}) g dx \leq C \int_{\RR^n} {\bf G}_{1}(|{\bf g}-{\bf h}_\mu|^{\frac{q}{p-1}}\chi_{\Om}) g dx.$$
	
	Due to the arbitrariness of $g$, this yields  
	\begin{equation}\label{I1nabU}
	{\bf G}_{1}(|\nabla U|^q\chi_{\Om})\leq C\, {\bf G}_{1}(|{\bf g}-{\bf h}_\mu|^{\frac{q}{p-1}}\chi_{\Om}) \quad {\rm a.e.~in~} \RR^n
	\end{equation}
	for a constant $C$ that depends only on $n, p, \Lambda, q, {\rm diam}(\Om)$,  and ${\rm diam}(\Om)/R_0$.
	
	Note that by \eqref{Gree} and  the pointwise estimate \eqref{Greenest} it follows that
	\begin{equation}\label{hcontrol}
	|{\bf h}_\mu(x)| \leq C\, {\bf G}_{1}(|\mu|)(x) \quad {\rm a.e.~in~} \RR^n.
	\end{equation}

	On the other hand, by \cite[Theorem 1.2]{MV}	we find
	\begin{equation}\label{lap}
	 {\bf G}_{1}[{\bf G}_{1}(|\mu|)^{\frac{q}{p-1}}](x)\leq C [\mu]^{\frac{q-p+1}{p-1}}_{M^{1,\frac{q}{q-p+1}}(\Om)}\  {\bf G}_{1}(|\mu|)(x) \quad {\rm a.e.~in~} \RR^n.
	\end{equation}
	
Thus in  view of \eqref{hcontrol} we see that 
	
	\begin{equation}\label{Vercon}
	{\bf G}_{1}[|{\bf h}_\mu|^{\frac{q}{p-1}}] \leq C [\mu]^{\frac{q-p+1}{p-1}}_{M^{1,\frac{q}{q-p+1}}(\Om)}\  {\bf G}_{1}(|\mu|)(x) \quad {\rm a.e.~in~} \RR^n.
	\end{equation}

	Combining \eqref{I1nabU} and \eqref{Vercon} we arrive at the  pointwise estimate \eqref{pointwiseB} as desired.
\end{proof}

We are now ready  to prove Theorem \ref{main-Ric}. 

\begin{proof}[{\bf Proof of Theorem \ref{main-Ric}}] Let $\om$ and ${\bf f}$ be as in the theorem.
	Our strategy is to apply Schauder Fixed Point Theorem to the following closed and convex subset of  $W_0^{1,\, q}(\Om)$:
	$$E:= \left\{v\in W_0^{1,\, q}(\Om): {\bf G}_1(|\nabla v|^q) \leq T\, {\bf G}_1[|{\bf f}|^\frac{q}{p-1} + {\bf G}_1(|\om|)^{\frac{q}{p-1}}] {\rm ~a.e.} \right\},$$	
	where $T>0$ is to be chosen.	
	
	Note that by \eqref{lap} we have 
	$${\bf G}_1 [{\bf G}_1(|\om|)^{\frac{q}{p-1}}] \leq C [\om]^{\frac{q-p+1}{p-1}}_{M^{1,\frac{q}{q-p+1}}(\Om)}\ {\bf G}_1(|\om|).$$

	Thus by Theorems 1.1 and 1.2 of \cite{MV} (see also \cite[Theorem 2.3]{Ph1}), from the definition of $E$ we obtain for any $v\in E$, 
	\begin{equation*}
	[|\nabla v|^q]_{M^{1, \frac{q}{q-p+1}}(\Om)}
	\leq C_0 T  \left[[\om]^{\frac{q}{p-1}}_{M^{1,\, \frac{q}{q-p+1}}(\Om)} + [|{\bf f}|^{\frac{q}{p-1}}]_{M^{1, \frac{q}{q-p+1}}(\Om)}\right]
	\end{equation*}
	for a constant $C_0$ depends only on $n, p, \Lambda, q, {\rm diam}(\Om)$,  and ${\rm diam}(\Om)/R_0$.
	
	Therefore, if we assume that
	\begin{equation*}
	[\om]_{M^{1, \frac{q}{q-p+1}}(\Om)}^{\frac{q}{p-1}} + [|{\bf f}|^{\frac{q}{p-1}}]_{M^{1, \frac{q}{q-p+1}}(\Om)}\leq c_0,
	\end{equation*}
	where $c_0$ is to be determined, then we have for any $v\in E$, 
	\begin{equation}\label{nabva}
	[|\nabla v|^q]_{M^{1, \frac{q}{q-p+1}}(\Om)} 	\leq c_0 C_0 T. 
	\end{equation}

	Let $S: E\rightarrow W_0^{1,\, q}(\Om)$ be defined by 
	$S(v)=u$ where $u\in W_0^{1,\, q}(\Om)$ is the unique   solution of
	\begin{eqnarray*}
	\left\{\begin{array}{rcl}
	-{\rm div}\, \mathcal{A}(x, \nabla u) &=& |\nabla v|^q + \om + {\rm div}\, {\bf f}\quad {\rm in}~ \Om,\\
	u&=&0\quad {\rm on}~ \partial\Om.
	\end{array}
	\right.
	\end{eqnarray*}
	
	We  claim that there are  $T>0$ and $c_0>0$ such that  
	$S: E \rightarrow E$.

	 By Lemma \ref{interm} we may assume that 
	\begin{equation}\label{ZZZ}
	{\bf G}_{1}(|\nabla S(v) |^q)\leq C_1 \left[{\bf G}_{1}(|{\bf g}|^{\frac{q}{p-1}}) + [|\nabla v|^q]^{\frac{q-p+1}{p-1}}_{M^{1,\frac{q}{q-p+1}}(\Om)}\ {\bf G}_{1}(|\nabla v|^q) \right] \quad {\rm a.e.~in~} \RR^n,
	\end{equation}
	where ${\bf g}={\bf f}-{\bf h}_{\om}$ and ${\bf h}_{\om}$ is the gradient vector field associated to $\om$ as in the proof of Lemma  \ref{interm}. 
	
	We next  note from \eqref{hcontrol} that 
	\begin{equation}\label{gcon}
	|{\bf g}|^{\frac{q}{p-1}}\leq C_2 [|{\bf f}|^{\frac{q}{p-1}} + {\bf G}_1(|\om|)^{\frac{q}{p-1}}] \quad {\rm a.e.}
	\end{equation}

	Moreover, 	in view of  \eqref{nabva} we have 
	\begin{equation}\label{small}
	[|\nabla v|^q]^{\frac{q-p+1}{p-1}}_{M^{1,\frac{q}{q-p+1}}(\Om)}\,  {\bf G}_{1}(|\nabla v|^q)  \leq  (c_0C_0T)^{\frac{q-p+1}{p-1}} T\, {\bf G}_1[|{\bf f}|^{\frac{q}{p-1}} + {\bf G}_{1}(|\om|)^{\frac{q}{p-1}}] .
	\end{equation}

	Combining \eqref{ZZZ}, \eqref{gcon}, and \eqref{small} yields 
	\begin{equation*}
	{\bf G}_{1}(|\nabla S(v) |^q)\leq (\max\{C_1, C_2\}+1)^2  \left((c_0C_0T)^{\frac{q-p+1}{p-1}} T+1\right) {\bf G}_{1}[|{\bf f}|^{\frac{q}{p-1}} + {\bf G}_1(|\om|)^{\frac{q}{p-1}}]. 
	\end{equation*}

	We now choose $T=2(\max\{C_1, C_2\}+1)^2$ and then choose $c_0>0$ so that $ (c_0C_0T)^{\frac{q-p+1}{p-1}} T \leq 1$. Then it follows  that 
	\begin{equation*}
	{\bf G}_{1}(|\nabla S(v) |^q) \leq T \left[ {\bf G}_{1}(|{\bf f}|^{\frac{q}{p-1}}) +  {\bf G}_{1}(|\om|)  \right],
	\end{equation*}
	and thus  $S(v)\in E$ as desired.

	We next show that the set  $S(E)$ is precompact 	in the strong topology of $W_{0}^{1,\, q}(\Om)$.  Let 
	$u_k=S(v_k)$ where $\{v_k\}$ is a sequence in $E$. We have
	\begin{eqnarray*}
	\left\{\begin{array}{rcl}
	-{\rm div}\, \mathcal{A}(x, \nabla u_k) &=& |\nabla v_k|^q + \om + {\rm div }\, {\bf f} \quad {\rm in}~ \Om,\\
	u_k&=&0\quad {\rm on}~ \partial\Om.
	\end{array}
	\right.
	\end{eqnarray*}
	
	As $|\nabla v_k|^q + \om + {\rm div }\, {\bf f}={\rm div}\, ({\bf f}- {\bf h}_\om -{\bf h}_{|\nabla v_k|^q})$ in $\mathcal{D}'(\Om)$, where 
	\begin{align*}
	|{\bf h}_\om| +|{\bf h}_{|\nabla v_k|^q})|&\leq C {\bf G}_1(|\om|+ |\nabla v_k|^q)\\
	&\leq C[{\bf G}_1(|\om|) + T\, {\bf G}_1[|{\bf f}|^\frac{q}{p-1} + {\bf G}_1(|\om|)^{\frac{q}{p-1}}]] \\
	& \leq C[{\bf G}_1(|\om|) +  {\bf G}_1(|{\bf f}|^\frac{q}{p-1})],
	\end{align*}
	we may apply Lemma \ref{compactness} to see that  $\{|\nabla u_k|^q\}$ is a bounded and equi-integrable subset of $L^1(\Om)$.

	On the other hand, by \cite[Theorem 2.1]{BM} there exists a subsequence $\{u_{k'}\}$ and a function $u\in W^{1,\, q}_0(\Om)$ such that
	$$\nabla u_{k'} \rightarrow \nabla u$$ 
	a.e. in $\Om$. Thus  Vitali Convergence Theorem  yields that $u_{k'}\rightarrow u$ in $W^{1,\, q}_0(\Om)$ as desired.

	Similarly, by uniqueness we see that the map $S$ is continuous on $E$ (in the strong topology of $W_0^{1,\, q}(\Om)$). 
	Then  by  Schauder Fixed Point Theorem, S has a fixed point in $E$, which gives a solution $u$ to  problem \eqref{Riccati}. This completes the proof of the theorem.
\end{proof}

\section{Proof of Theorem \ref{second-main-Ric}}

For any nonnegative measure $\nu$ we define
\begin{equation*}
\mathbf{P}^R[\nu](x)=\left(\int_{0}^{R}\left(\frac{\nu(B_r(x))}{r^{n-1}}\right)^\beta \frac{dr}{r}\right)^{\frac{1}{\beta(p-1)}}, \quad R=2{\rm diam}(\Omega),
\end{equation*}
where  $\beta=1$ if $p>2-1/n$ and $\beta$ is any number in $\left(0,\frac{(p-1)n}{n-1}\right)$ if $\frac{3n-2}{2n-1}<p\leq 2-1/n$.  For $\kappa>0$, we also let 
$$
\mathbf{T}[\nu](x)=d(x)^{-\kappa}\mathbf{P}^R[\nu](x)\chi_{\Om}(x),
$$ 
where recall that $d(x)$ is the distance from $x$ to $\partial\Om$.

It is clear that  if $\epsilon_0$ is a positive number for which \eqref{integcond} holds then 
for any $0<\kappa \leq \frac{\epsilon_0}{4n}$,
\begin{equation}\label{Z2}
\|d^{-\kappa}\|_{L^{2n}(\Omega)}\leq C.
\end{equation}

On the other hand, note that for any $f\in L^{2n}(\Omega)$, 
$$
\|\mathbf{P}^R[|f|]\|_{L^\infty(\mathbb{R}^n)}\leq C(R,\beta)\|f\|_{L^{2n}(\Omega)}^{\frac{1}{p-1}}.
$$

Thus we have that, for any $0<\kappa \leq \frac{\epsilon_0}{4n}$,  
\begin{equation}\label{Z1}
\|\mathbf{P}^R[d(\cdot)^{-\kappa }\chi_\Om(\cdot)]\|_{L^\infty(\mathbb{R}^n)}\leq C.
\end{equation}

We now record the following result that was obtained in \cite{HP}.
\begin{lemma}\label{HP-point}
Let $p> \frac{3n-2}{2n-1}$ and suppose that $\mu$ is finite signed measure in $\Om$.  If $u$ is a renormalized solution to 
\begin{eqnarray*}
\left\{ \begin{array}{rcl}
-{\rm div}\,\mathcal{A}(x, \nabla u)&=&  \mu  \quad \text{in} ~\Omega, \\
u&=&0  \quad \text{on}~ \partial \Omega,
\end{array}\right.
\end{eqnarray*}
and $\partial\Om$ is sufficiently flat,  then 
\begin{equation*}
|\nabla u(x)|\leq C \mathbf{T}[|\mu|](x)
\end{equation*}
for a.e. $x\in \mathbb{R}^n$, where $|\nabla u(x)|$ is set to be  zero outside $\Om$.  
\end{lemma}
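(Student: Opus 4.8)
The plan is to derive the pointwise bound by combining an \emph{interior} gradient potential estimate with a \emph{boundary} estimate based on the Reifenberg flatness of $\partial\Om$, the transition between the two regimes at the scale $r\sim d(x)$ producing the degenerate weight $d(x)^{-\kappa}$. Throughout, the H\"older continuity \eqref{Holder} of $\mathcal{A}(\cdot,\xi)$ (or the weaker Dini condition mentioned above) is what upgrades the $L^q$-averaged gradient bounds of the earlier sections to a genuine pointwise statement, since it yields quantitative excess decay for gradients of solutions of the frozen-coefficient homogeneous equation.

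First I would treat an interior point, say with $B_{d(x)/2}(x)\Subset\Om$, and set up the comparison scheme of Duzaar--Mingione and Kuusi--Mingione: on each ball $B_r(x)$, $0<r\le d(x)/2$, compare $u$ with the solution $w$ of $\mathrm{div}\,\mathcal{A}(\cdot,\nabla w)=0$, and then with the solution of the equation whose coefficients are frozen and averaged over $B_r(x)$, each sharing the boundary values of $u$. For $p>2-\frac1n$ this yields the $L^1$-comparison $\fint_{B_{r}(x)}|\nabla u-\nabla w|\,dy\lesssim\big(|\mu|(B_{r}(x))/r^{n-1}\big)^{1/(p-1)}$, while for $\frac{3n-2}{2n-1}<p\le 2-\frac1n$ one uses the modified exponent $\beta<\frac{(p-1)n}{n-1}$ and the truncation arguments appropriate to renormalized solutions in this very singular regime, obtaining the corresponding estimate with $\big(|\mu|(B_r(x))/r^{n-1}\big)^{\beta}$ integrated in $dr/r$. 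Combining with the H\"older-in-$\rho$ decay of the excess $\fint_{B_{\rho}}|\nabla w-(\nabla w)_{B_\rho}|$ coming from \eqref{Holder} and iterating over the dyadic radii $r_j=2^{-j}d(x)$ gives
\[
|\nabla u(x)|\le C\,\mathbf{P}^{\,d(x)}[|\mu|](x)+C\fint_{B_{d(x)/2}(x)}|\nabla u|\,dy .
\]

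It then remains to bound the last average by $C\,d(x)^{-\kappa}\mathbf{P}^{R}[|\mu|](x)$. For this I would run the analogous comparison iteration on the scales $r\in[d(x),R]$, now centered at the boundary point $x_0\in\partial\Om$ with $|x-x_0|=d(x)$: by $(\delta,R_0)$-Reifenberg flatness $B_r(x_0)\cap\Om$ is trapped between two half-balls, so one can compare $u$ with the solution of the homogeneous problem vanishing on the flat piece of the boundary and invoke boundary (global) Lipschitz/H\"older regularity for that problem in a half-ball. The excess-decay exponent obtained this way is only a small positive power depending on how small $\delta$ is; choosing $\delta$ small in terms of the prescribed $\kappa$ (this is exactly where the dependence $\delta=\delta(n,p,\Lambda,q,\epsilon_0)$ of Theorem \ref{second-main-Ric} enters, through $\kappa\le\epsilon_0/4n$) and summing the geometric-type series from scale $R$ down to scale $d(x)$ produces the loss $d(x)^{-\kappa}$ together with $C\,\mathbf{P}^{R}[|\mu|](x)$ and a global energy term, the latter controlled by $\mathbf{P}^{R}[|\mu|]$ through the basic Marcinkiewicz-type a priori estimate for renormalized measure-data solutions. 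Adding the interior iteration (scales $r<d(x)$) to the boundary iteration (scales $r>d(x)$) yields $|\nabla u(x)|\le C\,d(x)^{-\kappa}\mathbf{P}^{R}[|\mu|](x)$ for a.e. $x\in\Om$, i.e. $|\nabla u(x)|\le C\,\mathbf{T}[|\mu|](x)$.

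The main obstacle is the subcritical range $\frac{3n-2}{2n-1}<p\le 2-\frac1n$: there $\nabla u$ need not be locally integrable in the usual sense, the standard (non-intrinsic) Euclidean iteration fails, and the comparison estimates must be carried out for renormalized solutions using truncations and the modified potential $\mathbf{P}^{R}$ with $\beta<\frac{(p-1)n}{n-1}$; this is precisely the technical heart of \cite{HP}. A secondary difficulty is making the matching of the interior scales ($r<d(x)$) with the boundary scales ($r>d(x)$) quantitatively clean enough that the combined loss is no worse than a fixed small power of $d(x)$, which forces careful tracking of the $\delta$-dependence in the boundary excess decay.
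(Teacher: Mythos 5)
The paper itself contains no proof of this lemma: the authors write ``We now record the following result that was obtained in \cite{HP}'' and simply import the statement wholesale from that reference. There is therefore no in-paper argument for you to be compared against; what you have written is a reconstruction of the proof from the external reference \cite{HP}, which the present paper treats as a black box (it is invoked once, inside the proof of Theorem \ref{second-main-Ric}, and never re-derived).

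With that caveat, your sketch is a reasonable high-level account of the sort of argument one expects in \cite{HP}: an interior Duzaar--Mingione/Kuusi--Mingione comparison iteration over scales $r\le d(x)$ producing $\mathbf{P}^{d(x)}[|\mu|](x)$, a boundary comparison iteration over $d(x)\le r\le R$ on the Reifenberg flat domain whose small excess-decay exponent (depending on $\delta$) is a plausible origin of the degenerate factor $d(x)^{-\kappa}$, and the $\beta$-modified truncated potential with $\beta<\frac{(p-1)n}{n-1}$ to reach the singular range $\frac{3n-2}{2n-1}<p\le 2-\frac1n$. One step is stated too cleanly, though: for $p<2$ the scalewise comparison estimate is not of the bare form $\fint_{B_r}|\nabla u-\nabla w|\lesssim\bigl(|\mu|(B_r)/r^{n-1}\bigr)^{1/(p-1)}$; it necessarily carries Gagliardo--Nirenberg interpolation terms in $\fint_{B_r}|\nabla u|$, and propagating these through the iteration for renormalized solutions on a Reifenberg domain is exactly what makes the cited result hard (you flag this yourself as the main obstacle). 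The upshot is that your outline is plausible and points in the right direction, but it cannot actually be checked against the paper, because the paper provides no proof of this lemma at all.
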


We can now prove Theorem \ref{second-main-Ric}.

\begin{proof}[{\bf Proof of Theorem \ref{second-main-Ric}}] Let $\epsilon_0$ be as in the theorem and suppose that ${\rm supp}(\om)\subset\Om_{\delta_0}$.
	 In this proof, we shall fix a  $\kappa\in (0,\frac{\epsilon_0}{4n})$.
 
 By inequality (2.10) of \cite{Ph1} and condition \eqref{c0forom} we have 
  \begin{equation}\label{Z13}
  \mathbf{P}^R[\left(\mathbf{P}^R[|\omega|]\right)^q](x)\leq C [\om]^{\frac{q-p+1}{p-1}}_{M^{1, \frac{q}{q-p+1}}(\Om)}\mathbf{P}^{2R}[\omega](x)\leq C (c_0)^{\frac{q-p+1}{q}} \mathbf{P}^{R}[|\omega|](x), 
  \end{equation}
 for a.e. $x\in \Om$.

 Moreover, since ${\rm supp}(\om)\subset\Om_{\delta_0/2}$ we also have 
 \begin{equation*}
 \|\mathbf{P}^R[|\omega|]\|_{L^\infty(\mathbb{R}^n\backslash \Omega_{\delta_0/4})}\leq C(\delta_0,\beta,p,n, R) 	|\om|(\Om)^{\frac{1}{p-1}},
 \end{equation*}
 and thus by \eqref{Z1}, 
 \begin{align*}
 \mathbf{P}^R[\left\{d(\cdot)^{-\kappa}\mathbf{P}^R[|\omega|]\chi_{\Om}(\cdot)\right\}^q](x)\leq   C(\delta_0)	\mathbf{P}^R[\left(\mathbf{P}^R[\omega]\right)^q](x)+ C\,  |\om|(\Om)^{\frac{q}{(p-1)^2}},
 \end{align*}
 
 Combining this with \eqref{Z13} and condition \eqref{c0forom}, we find
 \begin{align*}
 \mathbf{P}^R[\left\{d(\cdot)^{-\kappa}\mathbf{P}^R[|\omega|]\chi_{\Om}(\cdot)\right\}^q](x)&\leq C\,   (c_0)^{\frac{q-p+1}{q}}	\mathbf{P}^R[|\om|](x)+  
 C \, |\om|(\Om)^{\frac{q-p+1}{(p-1)^2}} |\om|(\Om)^{\frac{1}{p-1}}\\
 &\leq C\,   (c_0)^{\frac{q-p+1}{q}}	\mathbf{P}^R[|\om|](x)+  
 C \, (c_0)^{\frac{q-p+1}{q(p-1)}}  |\om|(\Om)^{\frac{1}{p-1}}\\
 &\leq C [(c_0)^{\frac{q-p+1}{q}} +(c_0)^{\frac{q-p+1}{q(p-1)}}]	\mathbf{P}^R[|\om|](x)
 \end{align*}
 for a.e. $x\in\Om$. This gives 
 \begin{align}\label{ES1}
 \mathbf{T}[ \mathbf{T}[|\omega|]^q](x) \leq C [(c_0)^{\frac{q-p+1}{q}} +(c_0)^{\frac{q-p+1}{q(p-1)}}]	\mathbf{T}[|\om|](x)
\end{align}	
for a.e. $x\in\RR^n$.

\noindent {\bf Step 1:} In this step, we assume that  $\omega\in C_c^\infty$ with ${\rm supp}(\omega) \subset \Omega_{\delta_0}$.
Let us set
\begin{align*}
V=\left\{v\in W^{1,\, 1}_0:	 |\nabla v| \chi_\Om\leq N \mathbf{T}[|\omega|] \quad {\rm a.e.}\right\},
\end{align*}
where $N$ is to be determined. 
Since $\omega\in C^\infty_c(\Om)$, in view of  \eqref{Z2}  we have that
	\begin{equation*}
	|\nabla v(x)|^q\leq C(\omega)d(x)^{-q\kappa} \in L^{2n}(\Omega),
	\end{equation*}
 and in particular,   $|\nabla v|^q\in W^{-1,\frac{p}{p-1}}(\Om)$ for any $v\in V$.

	We next define a map $S: V\rightarrow W^{1,\, 1}_0$ by letting $S(v)=u$, where  $v\in V$   and $u$ is the unique renormalized solution to 
	\begin{eqnarray*}
		\left\{ \begin{array}{rcl}
			-{\rm div}\,\mathcal{A}(x, \nabla u)&=& |\nabla v|^q + \omega  \quad \text{in} ~\Omega, \\
			u&=&0  \quad \text{on}~ \partial \Omega.
		\end{array}\right.
	\end{eqnarray*}

By Lemma \ref{HP-point} and \eqref{ES1} we have 
\begin{align*}
|\nabla u| \chi_\Om &\leq {\bf T}[|\nabla v|^q\chi_\Om +|\om|]\\
& \leq C N^{\frac{q}{p-1}} \mathbf{T}[\mathbf{T}[|\om|]^q] + C \mathbf{T}[|\om|]\\
& \leq C N^{\frac{q}{p-1}} [(c_0)^{\frac{q-p+1}{q}} +(c_0)^{\frac{q-p+1}{q(p-1)}}] \mathbf{T}[|\om|] + C \mathbf{T}[|\om|]. 
\end{align*}

Thus if we choose $N=2C$ and $c_0$ sufficiently small we obtain that   $S(V)\subset V$. Moreover, using the results of \cite{DMOP}, it can be shown that 
$S$ is continuous and compact (see  also \cite{QH4}). Thus by Schauder Fixed Point Theorem,   there exists a solution  $u\in V$  to the equation \eqref{Riccati2}.

\bigskip

\noindent 	{\bf Step 2:} Let $\om_k =  \rho_k*\om$, where  $\{\rho_k\}_{k\in \mathbb{N}}$ is a standard sequence of mollifiers.  Choose $k$ sufficiently large so that $\om_k\in C^\infty_c(
\Om_{\delta_0/2})$ for all such $k$.  It is easy to see from condition \eqref{c0forom}
 that 
\begin{equation*}
[\om_k]^{\frac{q}{p-1}}_{M^{1, \frac{q}{q-p+1}}(\Om)}\leq A c_0,
\end{equation*}
where $A$ is independent of $k$.
Thus 	we may apply {\bf Step 1} with  $\omega=\om_k$ to obtain  a sequence of solutions $\{u_k\}\subset V$ to the equation  
	\begin{eqnarray*}
		\left\{ \begin{array}{rcl}
			-{\rm div}\,\mathcal{A}(x, \nabla u_k)&=& |\nabla u_k|^q + \om_k  \quad \text{in} ~\Omega, \\
			u_k&=&0  \quad \text{on}~ \partial \Omega.
		\end{array}\right.
	\end{eqnarray*}

Then we  apply the  results of \cite{DMOP} to get a subsequence $\{u_{k'}\}$  and function $u$ such that $\nabla u_{k'}\rightarrow \nabla u$  in $L^q(\Om)$ and $u$ is a renormalized solution of 
\eqref{Riccati2} (see also \cite{QH4}).		
\end{proof}

\end{document}